



 \documentclass[12pt, reqno]{amsart}
 \usepackage[margin=0.80in]{geometry}


\usepackage{amsfonts}

\usepackage{setspace}
\usepackage{graphicx}
\usepackage{eufrak}
\usepackage{mathrsfs}
\usepackage{yfonts}
\usepackage{hyperref}
\usepackage{graphicx}
\usepackage{amsmath, amsthm, amscd, amsfonts, amssymb, graphicx, color}
\usepackage{url}
\usepackage{enumerate}
\makeatletter
\let\reftagform@=\tagform@
\def\tagform@#1{\maketag@@@{(\ignorespaces\textcolor{blue}{#1}\unskip\@@italiccorr)}}
\renewcommand{\eqref}[1]{\textup{\reftagform@{\ref{#1}}}}
\makeatother
\usepackage{hyperref}
\hypersetup{colorlinks=true, linkcolor=red, anchorcolor=green,
    citecolor=cyan, urlcolor=red, filecolor=magenta, pdftoolbar=true}

\usepackage{epsfig}        
\usepackage{epic,eepic}       



\setcounter{MaxMatrixCols}{10}

\newtheorem{theorem}{Theorem}
\theoremstyle{plain}

\newtheorem{cor}{Corollary}

\newtheorem{thm}{Theorem}
\newtheorem{rem}{Remark}

\newtheorem{remark}{Remark}

\numberwithin{equation}{section}


\begin{document}
   \title[Bounds for the  Difference between two \v{C}eby\v{s}ev
   functional]{Bounds for the difference between two \v{C}eby\v{s}ev
   	functionals}
   \author[M.W. Alomari]{Mohammad W. Alomari}
   \address{ Department of Mathematics, Faculty of Science and
   	Information Technology, Irbid National University, P.O. Box 2600,
   	Irbid, P.C. 21110, Jordan.} \email{mwomath@gmail.com}

   \subjclass[2000]{26D15} \keywords{\v{C}eby\v{s}ev functional,
   	Gr\"{u}ss inequality}
   
   \begin{abstract}
   	In this work,  a  generalization of pre-Gr\"{u}ss inequality is
   	established. Several bounds for the difference between two
   	\v{C}eby\v{s}ev functional are proved.
   \end{abstract}

    \maketitle
    \section{Introduction}

It is well known that for a continuous function $f$ defined on
$[a,b]$, the integral mean-value theorem (IMVT)  guarantees $x\in
[a,b]$ such that
\begin{align}
f\left({x}\right)=\frac{1}{b-a}\int_a^b{f\left({t}\right)dt}.\label{eq4.1.1}
\end{align}
On the other hand, for a monotonic function $g:[a,b]\to
\mathbb{R}$ that does not change sign in the interval $[a,b]$, the
weighted IMVT reads that there exists $x\in [a,b]$ such that
\begin{equation}
\int_a^b{f\left({t}\right)g\left({t}\right)dt}=f\left({x}\right)
\int_a^b{g\left({t}\right)dt}.\label{eq4.1.2}
\end{equation}
If one replaces the value of $f(x)$ in (\ref{eq4.1.2}) by its
value in (\ref{eq4.1.1}) then we get
\begin{equation}
\int_a^b{f\left({t}\right)g\left({t}\right)dt}=\frac{1}{b-a}\int_a^b{f\left({t}\right)dt}
\int_a^b{g\left({t}\right)dt}.\label{eq4.1.3}
\end{equation}
To get weighted values in  (\ref{eq4.1.3}) we divide the both
sides by the quantity `$b-a$'  to get
\begin{equation}
\frac{1}{b-a}\int_a^b{f\left({t}\right)g\left({t}\right)dt}=\frac{1}{b-a}\int_a^b{f\left({t}\right)dt}
\cdot \frac{1}{b-a}\int_a^b{g\left({t}\right)dt},\label{eq4.1.4}
\end{equation}
which means in such way that the weighted product of two functions
equal to the product of weights of that functions.

The difference between these weights
\begin{align}
\label{identity}\mathcal{T}_a^b\left( {f,g} \right) = \frac{1}{{b
		- a}}\int_a^b {f\left( t \right)g\left( t \right)dt}  -
\frac{1}{{b - a}}\int_a^b {f\left( t \right)dt}  \cdot \frac{1}{{b
		- a}}\int_a^b {g\left( t \right)dt}.
\end{align}
is called `the \v{C}eby\v{s}ev functional', which plays an
important role in Numerical Approximations and Operator Theory.
For more detailed history see \cite{MPF}.

The most famous bounds for the \v{C}eby\v{s}ev functional are
incorporated in the following theorem:
\begin{theorem}
	\label{thm1}Let $f,g:[a,b] \to \mathbb{R}$ be two absolutely
	continuous functions, then
	\begin{align}
	\left|{\mathcal{T}_a^b\left( {f,g} \right)} \right| \le\left\{
	\begin{array}{l} \frac{{\left( {b - a} \right)^2 }}{{12}}\left\|
	{f'} \right\|_\infty  \left\| {g'} \right\|_\infty
	,\,\,\,\,\,\,\,\,\,{\rm{if}}\,\,f',g' \in L_{\infty}[a,b],\,\,\,\,\,\,\,\,{\rm{proved \,\,in \,\,}}{\text{\cite{Cebysev}}}\\
	\\
	\frac{1}{4}\left( {M_1 - m_1} \right)\left( {M_2 - m_2}
	\right),\,\,\, {\rm{if}}\,\, m_1\le f \le M_1,\,\,\,m_2\le g \le
	M_2, \,\,{\rm{proved \,\,in \,\,}}{\text{\cite{Gruss}}}\\
	\\
	\frac{{\left( {b - a} \right)}}{{\pi ^2 }}\left\| {f'} \right\|_2
	\left\| {g'} \right\|_2
	,\,\,\,\,\,\,\,\,\,\,\,\,\,\,\,\,{\rm{if}}\,\,f',g' \in
	L_{2}[a,b],\,\,\,\,\,\,\,\,\,\,\,{\rm{proved \,\,in\,\,
	}}{\text{\cite{L}}}\\
	\\
	\frac{1}{8}\left( {b - a} \right)\left( {M - m} \right) \left\|
	{g'} \right\|_{\infty},\,\,\, {\rm{if}}\,\, m\le f \le M,\,g' \in
	L_{\infty}[a,b], \,\,{\rm{proved \,\,in \,\,}}{\text{\cite{O}}}
	\end{array} \right.
	\end{align}
	The constants $\frac{1}{12}$, $\frac{1}{4}$, $\frac{1}{\pi^2}$ and
	$\frac{1}{8}$ are the best possible.
\end{theorem}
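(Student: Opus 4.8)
The plan is to reduce all four estimates to a single master identity, namely the Korkine identity
\begin{equation*}
\mathcal{T}_a^b(f,g)=\frac{1}{2(b-a)^2}\int_a^b\int_a^b\bigl(f(t)-f(s)\bigr)\bigl(g(t)-g(s)\bigr)\,dt\,ds,
\end{equation*}
which I would verify by expanding the integrand and invoking the definition \eqref{identity}. Two consequences are immediate. First, taking absolute values and using $|f(t)-f(s)|\le|t-s|\,\|f'\|_\infty$ (valid since $f$ is absolutely continuous), together with $\int_a^b\int_a^b(t-s)^2\,dt\,ds=\tfrac{(b-a)^4}{6}$, yields the first (\v{C}eby\v{s}ev) bound $\tfrac{(b-a)^2}{12}\|f'\|_\infty\|g'\|_\infty$. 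Second, since the identity exhibits $\mathcal{T}_a^b$ as a symmetric positive semi-definite bilinear form, the Cauchy--Bunyakovsky--Schwarz inequality gives the pre-Gr\"{u}ss estimate
\begin{equation*}
\bigl|\mathcal{T}_a^b(f,g)\bigr|\le\sqrt{\mathcal{T}_a^b(f,f)}\,\sqrt{\mathcal{T}_a^b(g,g)},
\end{equation*}
which I would use as the common engine for the second and third bounds.

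It then remains to estimate the diagonal term $\mathcal{T}_a^b(f,f)=\tfrac{1}{b-a}\int_a^b(f-\bar f)^2$, where $\bar f=\tfrac{1}{b-a}\int_a^b f$, in two ways. When $m_1\le f\le M_1$, integrating $(M_1-f)(f-m_1)\ge0$ and completing the square gives $\mathcal{T}_a^b(f,f)\le(M_1-\bar f)(\bar f-m_1)\le\tfrac14(M_1-m_1)^2$; feeding this (and its analogue for $g$) into Cauchy--Schwarz produces the Gr\"{u}ss bound $\tfrac14(M_1-m_1)(M_2-m_2)$. When instead $f'\in L_2$, the Wirtinger/Poincar\'e inequality $\int_a^b(f-\bar f)^2\le\tfrac{(b-a)^2}{\pi^2}\int_a^b(f')^2$, whose extremal is the Neumann eigenfunction $\cos\tfrac{\pi(t-a)}{b-a}$, gives $\mathcal{T}_a^b(f,f)\le\tfrac{b-a}{\pi^2}\|f'\|_2^2$, and Cauchy--Schwarz then yields the third (Lupa\c{s}) bound $\tfrac{b-a}{\pi^2}\|f'\|_2\|g'\|_2$.

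The fourth, mixed bound is where the real work lies, and it is the step I expect to be the main obstacle: here Cauchy--Schwarz is too weak, since combining $\mathcal{T}_a^b(f,f)\le\tfrac14(M-m)^2$ with $\mathcal{T}_a^b(g,g)\le\tfrac{(b-a)^2}{12}\|g'\|_\infty^2$ only delivers the constant $\tfrac{1}{4\sqrt3}\approx0.144$, well above the claimed $\tfrac18$. Instead I would start from the Sonin form $\mathcal{T}_a^b(f,g)=\tfrac{1}{b-a}\int_a^b\bigl(f(t)-\tfrac{m+M}{2}\bigr)(g(t)-\bar g)\,dt$, bound $|f-\tfrac{m+M}{2}|\le\tfrac{M-m}{2}$, and reduce everything to the sharp auxiliary estimate
\begin{equation*}
\frac{1}{b-a}\int_a^b\bigl|g(t)-\bar g\bigr|\,dt\le\frac{b-a}{4}\,\|g'\|_\infty .
\end{equation*}
The difficulty is precisely the constant $\tfrac14$: writing $H(t)=\int_a^t(g-\bar g)$ so that $H(a)=H(b)=0$ and $|H''|\le\|g'\|_\infty$, the naive pointwise bound $|g(t)-\bar g|\le\tfrac{\|g'\|_\infty}{b-a}\cdot\tfrac{(t-a)^2+(b-t)^2}{2}$ only integrates to the weaker constant $\tfrac13$, so one must instead exploit the cancellation $\int_a^b(g-\bar g)=0$, equivalently control the total variation of $H$ through $|H(t)|\le\tfrac12\|g'\|_\infty(t-a)(b-t)$, to squeeze the constant down to $\tfrac14$. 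Finally I would confirm that all four constants are best possible by exhibiting extremals: $f=g=t$ for $\tfrac{1}{12}$, $f=g=\operatorname{sgn}(t-\tfrac{a+b}{2})$ for $\tfrac14$, $f=g=\cos\tfrac{\pi(t-a)}{b-a}$ for $\tfrac{1}{\pi^2}$, and the pair $f=\operatorname{sgn}(t-\tfrac{a+b}{2})$, $g=t$ for $\tfrac18$, each of which turns the relevant inequality into an equality, with the step-function choices read as limits of admissible functions in the classes that require absolute continuity.
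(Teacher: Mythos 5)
First, a point of comparison: the paper does not prove Theorem \ref{thm1} at all --- it is a survey statement, quoted with attributions to \v{C}eby\v{s}ev, Gr\"{u}ss, Lupa\c{s} and Ostrowski --- so your attempt can only be measured against the classical proofs, and for three of the four bounds you reproduce them correctly and completely. The Korkine identity is valid, the computation $\int_a^b\int_a^b(t-s)^2\,dt\,ds=\tfrac{(b-a)^4}{6}$ does give the constant $\tfrac{1}{12}$, the Cauchy--Schwarz (pre-Gr\"{u}ss) step is legitimate precisely because Korkine exhibits $\mathcal{T}_a^b$ as a positive semi-definite bilinear form, and the two diagonal estimates --- $\mathcal{T}_a^b(f,f)\le (M_1-\bar f)(\bar f-m_1)\le\tfrac14(M_1-m_1)^2$ from integrating $(M_1-f)(f-m_1)\ge 0$, and $\mathcal{T}_a^b(f,f)\le\tfrac{b-a}{\pi^2}\|f'\|_2^2$ from the zero-mean Wirtinger inequality with the Neumann constant --- are both correct. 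Your four extremal examples all check out (e.g.\ $\mathcal{T}_a^b(t,t)=\tfrac{(b-a)^2}{12}$, and for $f=\operatorname{sgn}\left(t-\tfrac{a+b}{2}\right)$, $g=t$ one gets $\mathcal{T}_a^b(f,g)=\tfrac{b-a}{4}=\tfrac18(b-a)(M-m)\|g'\|_\infty$), with the step functions correctly read as limits of admissible functions.

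The genuine gap is exactly where you predicted it, and the mechanism you propose does not suffice as stated. Your reduction of the Ostrowski bound via the Sonin form with $c=\tfrac{m+M}{2}$ to the sharp lemma $\int_a^b|g-\bar g|\,dt\le\tfrac{(b-a)^2}{4}\|g'\|_\infty$ is the right move, but the envelope bound $|H(t)|\le\tfrac12\|g'\|_\infty(t-a)(b-t)$ controls only the \emph{size} of $H(t)=\int_a^t(g-\bar g)\,ds$, not its total variation $\int_a^b|H'|=\int_a^b|g-\bar g|$: if $g-\bar g$ changes sign many times, $H$ may oscillate inside the parabolic envelope and the envelope alone yields nothing. (It does settle the single-sign-change case, where the variation is $2|H(t_1)|\le\|g'\|_\infty(t_1-a)(b-t_1)\le\tfrac{(b-a)^2}{4}\|g'\|_\infty$.) To close the general case, set $h=g-\bar g$, $K=\|g'\|_\infty$, and work on the sign intervals of $h$: on an interior component $(c,d)$ of $\{h>0\}$ one has $h(c)=h(d)=0$, hence $h(t)\le K\min(t-c,d-t)$ and $\int_c^d h\le\tfrac{K(d-c)^2}{4}$, while on the at most two components touching $a$ or $b$ only $\int h\le\tfrac{K\ell^2}{2}$ holds; summing, $\int_a^b h^+\le B_+$ with $B_+\le\tfrac{K}{2}\lambda_+^2$, where $\lambda_+$ is the measure of $\{h>0\}$, and symmetrically for $h^-$. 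Since $\int h=0$ forces $\int|h|=2\int h^+=2\int h^-\le 2\min(B_+,B_-)$, and $B_\pm>\tfrac{K(b-a)^2}{8}$ would force $\lambda_\pm>\tfrac{b-a}{2}$ for both signs, contradicting $\lambda_++\lambda_-\le b-a$, you get $\int_a^b|h|\le\tfrac{K(b-a)^2}{4}$, which is exactly the missing lemma. With that two-line balancing argument inserted, your proof is complete; without it, the fourth bound rests on an unproved (though true and sharp) inequality.
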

Many authors were studied the functional (\ref{identity}) and
therefore various bounds have been implemented, for more new
results and generalizations the reader may refer to
\cite{alomari1},\cite{alomari2},\cite{Cerone1},\cite{Cerone2},\cite{Cerone4},\cite{Dragomir1},\cite{HH}
and \cite{P}.\\

In 2001,  Cerone  \cite{Cerone5} established the following
identity for the \v{C}eby\v{s}ev functional:
\begin{thm}
	\label{thm2}Let $f,g:[a,b]\to \mathbb{R}$ be such that $f$ is of
	bounded variation and $g$ is continuous on $[a,b]$. Then, we have
	the following representation:
	\begin{align}
	\label{eq4.2.12}\mathcal{T}_a^b\left( {f,g} \right) =
	\frac{1}{{\left( {b - a} \right)^2 }}\int_a^b {\left[ {\left( {t -
				a} \right)\int_t^b {g\left( s \right)ds}  - \left( {b - t}
			\right)\int_a^t {g\left( s \right)ds} } \right]df\left( t
		\right)}.
	\end{align}
\end{thm}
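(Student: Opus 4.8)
The plan is to begin from the right-hand side of \eqref{eq4.2.12} and reduce it to \eqref{identity} by Riemann--Stieltjes integration by parts. I would abbreviate the kernel as
$K(t) := (t-a)\int_t^b g(s)\,ds - (b-t)\int_a^t g(s)\,ds$.
The first task is to check that the machinery is legitimate: since $g$ is continuous, the indefinite integrals $s\mapsto\int_a^s g$ and $s\mapsto\int_s^b g$ are continuously differentiable, so $K$ is in fact $C^1$ on $[a,b]$; as $f$ is of bounded variation, the Stieltjes integral $\int_a^b K\,df$ exists, and the integration-by-parts identity $\int_a^b K\,df = [K(t)f(t)]_a^b - \int_a^b f\,dK$ is valid. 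Because $K\in C^1$, the companion integral is an ordinary Riemann integral, $\int_a^b f\,dK = \int_a^b f(t)K'(t)\,dt$.

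Next I would dispose of the boundary term and compute $K'$. A direct substitution gives $K(a) = -(b-a)\int_a^a g = 0$ and $K(b) = (b-a)\int_b^b g = 0$, so $[K(t)f(t)]_a^b = 0$ and we are left with $\int_a^b K\,df = -\int_a^b f(t)K'(t)\,dt$. Differentiating $K$ by the product rule and the fundamental theorem of calculus, the two product derivatives contribute $-(t-a)g(t)$ and $-(b-t)g(t)$, which collapse to $-(b-a)g(t)$, while the surviving integral pieces combine as $\int_a^t g + \int_t^b g = \int_a^b g$. Thus $K'(t) = \int_a^b g(s)\,ds - (b-a)g(t)$, a clean cancellation that is the heart of the argument.

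Finally I would substitute this expression and multiply by $1/(b-a)^2$: the term $(b-a)g(t)$ yields $\frac{1}{b-a}\int_a^b f(t)g(t)\,dt$, while the constant $\int_a^b g$ yields $-\frac{1}{(b-a)^2}\int_a^b f(t)\,dt\cdot\int_a^b g(s)\,ds$, and together these are precisely $\mathcal{T}_a^b(f,g)$ as defined in \eqref{identity}. The computation itself is routine once $K'$ is known; the only point genuinely requiring care is the justification that Stieltjes integration by parts applies, which rests on pairing the bounded-variation integrand $f$ with the continuous (indeed $C^1$) function $K$ so that both $\int_a^b K\,df$ and $\int_a^b f\,dK$ exist. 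I therefore expect the existence/validity of the integration-by-parts step to be the main obstacle, with the remainder reducing to the fundamental theorem of calculus.
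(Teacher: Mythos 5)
Your proof is correct, and it is essentially the standard argument underlying the result: the paper quotes Theorem \ref{thm2} from Cerone without reproducing a proof, but your kernel satisfies $K(t) = -(b-a)\,\Psi_g(t;a,b)$, so your Riemann--Stieltjes integration by parts (justified by $f$ of bounded variation paired with the $C^1$ function $K$, the vanishing boundary values $K(a)=K(b)=0$, and $K'(t)=\int_a^b g(s)\,ds-(b-a)g(t)$) is exactly the computation that produces both \eqref{eq4.2.12} and the equivalent representation \eqref{eq4.2.14}. All steps check out, including the reduction of $\int_a^b f\,dK$ to the Riemann integral $\int_a^b f(t)K'(t)\,dt$, so nothing further is needed.
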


In 2007, Dragomir \cite{Dragomir2} established three equivalent
identities that generalized Cerone identity (\ref{eq4.2.12}) for
Riemann-Stieltjes integrals, in case of Riemann integral Dragomir
representation incorporated in the following theorem.
\begin{thm}
	\label{thm3}Let $f,g:[a,b]\to \mathbb{R}$ be such that $f$ is of
	bounded variation and $g$ is Lebesgue integrable on $[a,b]$. Then,
	\begin{align}
	\label{eq4.2.14}\mathcal{T}_a^b\left( {f,g} \right) =
	\frac{1}{{\left( {b - a} \right)^2
	}}\int_a^b {\left[ {\left( {t - a} \right)\int_a^b {g\left( t
				\right)dt}  - \left( {b - a} \right)\int_a^t {g\left( s \right)ds}
		} \right]df\left( t \right)}.
	\end{align}
\end{thm}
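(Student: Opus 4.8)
The plan is to deduce the identity directly from Cerone's representation \eqref{eq4.2.12}, which is already at our disposal, by showing that the two integration kernels coincide pointwise. Write $G := \int_a^b g(s)\,ds$ for the total integral, and observe that \eqref{eq4.2.14} differs from \eqref{eq4.2.12} only in the bracketed weight multiplying $df(t)$. In \eqref{eq4.2.12} this weight is $(t-a)\int_t^b g(s)\,ds - (b-t)\int_a^t g(s)\,ds$, whereas in \eqref{eq4.2.14} it is $(t-a)G - (b-a)\int_a^t g(s)\,ds$. So the whole statement reduces to verifying that these two expressions are equal for every $t\in[a,b]$.

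First I would replace $\int_t^b g(s)\,ds$ by $G - \int_a^t g(s)\,ds$ in Cerone's kernel. This turns the leading term into $(t-a)G - (t-a)\int_a^t g(s)\,ds$, so that the kernel becomes $(t-a)G - (t-a)\int_a^t g(s)\,ds - (b-t)\int_a^t g(s)\,ds$. Collecting the two terms that carry the factor $\int_a^t g(s)\,ds$ and using the elementary identity $(t-a)+(b-t)=b-a$, they combine into $-(b-a)\int_a^t g(s)\,ds$. What remains is exactly $(t-a)G - (b-a)\int_a^t g(s)\,ds$, i.e. the kernel of \eqref{eq4.2.14}. Substituting back into \eqref{eq4.2.12} then yields \eqref{eq4.2.14} verbatim.

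Alternatively, and more self-containedly, I would prove \eqref{eq4.2.14} from scratch by Riemann--Stieltjes integration by parts. Setting $K(t) := (t-a)G - (b-a)\int_a^t g(s)\,ds$, one checks the boundary values $K(a)=0$ and $K(b)=(b-a)G-(b-a)G=0$, so that $\int_a^b K(t)\,df(t) = -\int_a^b f(t)\,dK(t)$. Since $K$ is absolutely continuous with $dK(t) = \big(G-(b-a)g(t)\big)\,dt$, the right-hand side equals $(b-a)\int_a^b f(t)g(t)\,dt - G\int_a^b f(t)\,dt$; dividing by $(b-a)^2$ reproduces \eqref{identity}.

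Neither route presents a genuine obstacle: the first is a purely algebraic rearrangement, and the only thing to justify in the second is the integration-by-parts formula under the stated hypotheses ($f$ of bounded variation and $g$ Lebesgue integrable, which makes $K$ absolutely continuous and forces the boundary terms to vanish). I would present the algebraic reduction as the main argument, since Cerone's identity \eqref{eq4.2.12} is already available, and record the direct computation as a cross-check.
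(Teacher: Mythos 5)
Your proposal is correct, but note first that the paper itself offers no proof of Theorem \ref{thm3} at all: it is quoted as a known result of Dragomir \cite{Dragomir2}, so any complete argument you give is necessarily your own route. Both of your routes check out. The algebraic reduction is right: substituting $\int_t^b g(s)\,ds = G - \int_a^t g(s)\,ds$ into the kernel of \eqref{eq4.2.12} and using $(t-a)+(b-t)=b-a$ does turn Cerone's kernel into $(t-a)G-(b-a)\int_a^t g(s)\,ds$, which is the kernel of \eqref{eq4.2.14} (the inner dummy variable ``$t$'' in the paper's display is just sloppy notation for $G$). The integration-by-parts computation is also correct: $K(a)=K(b)=0$, $K$ is absolutely continuous with $K'(t)=G-(b-a)g(t)$ a.e., and $-\int_a^b f\,dK=(b-a)\int_a^b fg - G\int_a^b f$, which after division by $(b-a)^2$ is exactly \eqref{identity}. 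The one point where I would push back is your choice to make the reduction from \eqref{eq4.2.12} the \emph{main} argument and the direct computation a mere cross-check: Theorem \ref{thm2} assumes $g$ \emph{continuous}, whereas Theorem \ref{thm3} only assumes $g$ Lebesgue integrable, so the reduction proves the identity only under the stronger hypothesis and does not yield the theorem in its stated generality. Your second argument is the one that actually covers the stated hypotheses (continuity of $K$ together with bounded variation of $f$ guarantees the Riemann--Stieltjes integrals and the integration-by-parts formula), so it should be promoted to the primary proof, with the kernel identity recorded as the explanation of how \eqref{eq4.2.14} generalizes \eqref{eq4.2.12}.
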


The absolute difference between two integral means was studied
firstly by Barnett et al. in \cite{Barnett} and then by Cerone and
Dragomir in \cite{Cerone3}, we may summarize the obtained results,
as follow:

$\bullet$ For an  absolutely continuous function $f$ defined on
$[a,b]$ and for all $a\leq c<d\leq b$, we have
\begin{align}
& \left\vert {\frac{1}{{b-a}}\int_{a}^{b}{f\left( t\right) dt}-\frac{1}{{d-c}%
	}\int_{c}^{d}{f\left( s\right) ds}}\right\vert  \label{Bar4.3.1} \\
& \leq \left[ {\frac{1}{4}+\left( {\frac{{\left( {a+b}\right) /2-\left( {c+d}%
				\right) /2}}{{\left( {b-a}\right) -\left( {d-c}\right) }}}\right) ^{2}}%
\right] \left[ {\left( {b-a}\right) -\left( {d-c}\right) }\right]
\left\Vert
{f^{\prime }}\right\Vert _{\infty }  \notag \\
& \leq \frac{1}{2}\left[ {\left( {b-a}\right) -\left( {d-c}\right)
}\right] \left\Vert {f^{\prime }}\right\Vert _{\infty }  \notag
\end{align}
and
\begin{align}
& \left\vert {\frac{1}{{b-a}}\int_{a}^{b}{f\left( t\right) dt}-\frac{1}{{d-c}%
	}\int_{c}^{d}{f\left( s\right) ds}}\right\vert  \label{Cer4.3.2} \\
& \leq \left\{
\begin{array}{l}
\frac{{\left( {b-a}\right) }}{{\left( {q+1}\right) ^{1/q}}}\left[ {1+\left( {%
		\frac{\rho }{{1-\rho }}}\right) ^{q}}\right] ^{1/q}\left[
{v^{q+1}+\lambda
	^{q+1}}\right] ^{1/q}\left\Vert {f^{\prime }}\right\Vert _{p}, \\
\,\,\,\,\,\,\,\,\,\,\,\,\,\,\,\,\,\,\,\,\,\,\,\,\,\,\,\,\,\,\,\,\,\,\,\,\,\,%
\,\,\,\,\,\,\,\,\,\,\,\,\,\,\,f^{\prime }\in L_{p}\left[
{a,b}\right]
,\,\,\,1\leq p<\infty ,\,\,\frac{1}{p}+\frac{1}{q}=1; \\
\\
\frac{1}{2}\left[ {1-\rho +\left\vert {v-\lambda }\right\vert
}\right] \left\Vert {f^{\prime }}\right\Vert
_{1},\,\,\,\,\,\,\,\,\,\,\,f^{\prime
}\in L_{1}\left[ {a,b}\right] ;%
\end{array}%
\right.  \notag
\end{align}%
where $\left( {b-a}\right) v=c-a$, $\left( {b-a}\right) \rho =d-c$ and $%
\left( {b-a}\right) \lambda =b-d$.

$\bullet$ For a H\"{o}lder continuous function $f$ of order
$r\in(0,1]$ with constant $H>0$ on $[a,b]$, we have
\begin{equation}
\left\vert {\frac{1}{{b-a}}\int_{a}^{b}{f\left( t\right) dt}-\frac{1}{{d-c}}%
	\int_{c}^{d}{f\left( s\right) ds}}\right\vert \leq H\frac{{\left( {c-a}%
		\right) ^{r+1}+\left( {b-d}\right) ^{r+1}}}{{\left( {r+1}\right) \left[ {%
			\left( {b-a}\right) -\left( {d-c}\right) }\right] }}.
\label{Cer4.3.3}
\end{equation}%

$\bullet$ For a function $f$ of bounded variation on $[a,b]$, we
have
\begin{align}
& \left\vert {\frac{1}{{b-a}}\int_{a}^{b}{f\left( t\right) dt}-\frac{1}{{d-c}%
	}\int_{c}^{d}{f\left( s\right) ds}}\right\vert  \label{Cer4.3.4} \\
& \leq \left\{
\begin{array}{l}
\left[ {\frac{{b-a-\left( {d-c}\right) }}{2}+\left\vert {\frac{{c+d}}{2}-%
		\frac{{a+b}}{2}}\right\vert }\right] \frac{{\bigvee_{a}^{b}\left( f\right) }%
}{{b-a}}; \\
\\
L\frac{{\left( {c-a}\right) ^{2}+\left( {b-d}\right)
		^{2}}}{{2\left[ {\left( {b-a}\right) -\left( {d-c}\right) }\right]
}};\,\,\,\,\,\,\text{if\thinspace
	\thinspace f\thinspace \thinspace is\thinspace \thinspace L-Lipschitzian} \\
\\
\left( {\frac{{b-d}}{{b-a}}}\right) f\left( b\right) -\left( {\frac{{c-a}}{{%
			b-a}}}\right) f\left( a\right) +\left[ {\frac{{c+d-\left( {a+b}\right) }}{{%
			b-a}}}\right] f\left( {s_{0}}\right) ;\,\text{\thinspace } \\
\text{if\thinspace \thinspace f\thinspace \thinspace is\thinspace
	\thinspace
	monotonic\thinspace \thinspace nondecreasing}%
\end{array}%
\right.  \notag
\end{align}%
where, $s_{0}=\frac{{cb-ad}}{{\left( {b-a}\right) -\left( {d-c}\right) }}\in %
\left[ c,d\right] $.

For recent results the reader may refer to \cite{alomari3}, where
the author used \eqref{eq4.2.14} to obtain several bounds for the
\v{C}eby\v{s}ev functional. Bounds for the difference between two
Stieltjes integral means was presented in
\cite{alomari4}.\\

Let $g :\left[ {\alpha ,\beta } \right] \longrightarrow
\mathbb{R}$ be any integrable function  and define $\Psi:\left[
{\alpha ,\beta } \right] \longrightarrow \mathbb{R}$, such that
\begin{align*}
\Psi_g\left( {t;\alpha ,\beta } \right): = \int_{\alpha}^t
{g\left( s \right)ds}  - \frac{{t - \alpha}}{{\beta -
		\alpha}}\int_{\alpha}^{\beta} {g\left( s \right)ds}.
\end{align*}
From \eqref{eq4.2.14}, it is easy to observe  the following
representation of the \v{C}eby\v{s}ev functional
\begin{align*}
\mathcal{T}_{\alpha}^\beta  \left( {f,g} \right): = -
\frac{1}{{\beta - \alpha}}\int_{\alpha}^{\beta} {\Psi _g \left(
	{t;\alpha,\beta} \right)df\left( t \right)}.
\end{align*}
In this work by utilizing  the inequalities
(\ref{Bar4.3.1})--(\ref{Cer4.3.4}), several new bounds for the
absolute \textit{Difference between two \v{C}eby\v{s}ev
	functional} $\mathcal{T}_{a}^{v} \left( {f,g} \right) -
\mathcal{T}_{u}^{b} \left( {f,g} \right)$, for all $a \le u < v
\le b$  are provided.

Let us start by providing the following refinements of
pre-Gr\"{u}ss inequality, which states that for any two integrable
mappings defined on $[a,b]$, the inequality
\begin{align}
\label{eq2.1} \mathcal{T}_{a}^b\left( {f,g} \right) \le \left[
{\mathcal{T}_{a}^b\left( {f,f} \right)} \right]^{1/2} \cdot\left[
{ \mathcal{T}_{a}^b\left( {g,g} \right)} \right]^{1/2},
\end{align}
holds and  sharp (see \cite{Gruss}). Trivially, by applying AM--GM
inequality on the right hand side of \eqref{eq2.1}, we get
\begin{align}
\left[ {\mathcal{T}_{a}^b\left( {f,f} \right)} \right]^{1/2}
\cdot\left[ { \mathcal{T}_{a}^b\left( {g,g} \right)}
\right]^{1/2}\le  \frac{\mathcal{T}_a^b \left( {f,f}
	\right)+\mathcal{T}_a^b \left( {g,g} \right)}{2}.
\end{align}
We may generalize the pre-Gr\"{u}ss inequality \eqref{eq2.1} as
follows:
\begin{thm}
	\label{thm4} Let $f,g:[a,b]\to \mathbb{R}$ be two integrable
	mappings, then
	\begin{align}
	\label{eq2.2}&\left| {\mathcal{T}_a^v \left( {f,g} \right) -
		\mathcal{T}_u^b \left( {f,g} \right)} \right|
	\nonumber\\
	&\le \left( {\mathcal{T}_a^v \left( {f,f} \right)} \right)^{1/2}
	\left( {\mathcal{T}_a^v \left( {g,g} \right)} \right)^{1/2}  +
	\left( {\mathcal{T}_u^b \left( {f,f} \right)} \right)^{1/2} \left(
	{\mathcal{T}_u^b \left( {g,g} \right)} \right)^{1/2}
	\\
	&\le \frac{1}{2}\left[\mathcal{T}_a^v \left( {f,f}
	\right)+\mathcal{T}_a^v \left( {g,g} \right)+\mathcal{T}_u^b
	\left( {f,f} \right)+\mathcal{T}_u^b \left( {g,g} \right)
	\right],\nonumber
	\end{align}
	for all $a\le u<v\le b$. The double inequality is sharp.
\end{thm}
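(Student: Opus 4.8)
The plan is to reduce the two-interval estimate to the single-interval pre-Gr\"{u}ss inequality \eqref{eq2.1} via the triangle inequality, and then to pass to the second bound by AM--GM applied termwise. First I would write, by the triangle inequality,
\[
\left| \mathcal{T}_a^v(f,g) - \mathcal{T}_u^b(f,g)\right| \le \left|\mathcal{T}_a^v(f,g)\right| + \left|\mathcal{T}_u^b(f,g)\right|.
\]
Because $\mathcal{T}_\alpha^\beta(\cdot,\cdot)$ is a symmetric bilinear form with $\mathcal{T}_\alpha^\beta(h,h)\ge 0$ for every integrable $h$ (immediate from \eqref{identity} via Cauchy--Schwarz), inequality \eqref{eq2.1} holds in absolute-value form: replacing $f$ by $-f$ and using $\mathcal{T}_\alpha^\beta(-f,-f)=\mathcal{T}_\alpha^\beta(f,f)$ gives $\left|\mathcal{T}_\alpha^\beta(f,g)\right|\le\left(\mathcal{T}_\alpha^\beta(f,f)\right)^{1/2}\left(\mathcal{T}_\alpha^\beta(g,g)\right)^{1/2}$. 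Applying this once on $[a,v]$ and once on $[u,b]$ and adding the two estimates produces exactly the first line of \eqref{eq2.2}.

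For the second inequality I would apply the elementary bound $\sqrt{xy}\le\tfrac12(x+y)$ to each of the two products, namely with $x=\mathcal{T}_a^v(f,f)$, $y=\mathcal{T}_a^v(g,g)$ and, separately, with $x=\mathcal{T}_u^b(f,f)$, $y=\mathcal{T}_u^b(g,g)$, and then sum. This is legitimate since each value of $\mathcal{T}(\cdot,\cdot)$ at a diagonal pair is nonnegative, and it delivers the upper bound $\tfrac12[\mathcal{T}_a^v(f,f)+\mathcal{T}_a^v(g,g)+\mathcal{T}_u^b(f,f)+\mathcal{T}_u^b(g,g)]$.

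The only genuinely delicate point is sharpness, and here I would exhibit one configuration in which all three members of \eqref{eq2.2} coincide. Fix $a<u<v<b$ and take $f=g$ with $f$ nonconstant on $[a,u]$ but constant on $[u,b]$; for instance $f(t)=g(t)=\min\{t,u\}$. Then $\mathcal{T}_u^b(f,f)=\mathcal{T}_u^b(g,g)=0$, so on $[u,b]$ both the Cauchy--Schwarz step and the AM--GM step are trivially equalities, while on $[a,v]$ one has $\mathcal{T}_a^v(f,f)=\mathcal{T}_a^v(g,g)>0$, forcing equality in AM--GM there and equality in \eqref{eq2.1} because $f$ and $g$ are linearly dependent. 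Since in addition $\mathcal{T}_u^b(f,g)=0$, the triangle inequality is also an equality, and all three quantities in \eqref{eq2.2} reduce to $\mathcal{T}_a^v(f,f)$. Hence the double inequality cannot be improved.

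The step I expect to require the most care is arranging this extremal example so that the triangle, Cauchy--Schwarz, and AM--GM steps are simultaneously tight. The constraint $u<v$ forces $[a,v]$ and $[u,b]$ to overlap on $[u,v]$, so one cannot give the two single-interval functionals genuinely opposite signs; the clean remedy is to annihilate one of them by letting $f$ be constant on $[u,b]$, which remains compatible with $\mathcal{T}_a^v(f,f)>0$ only because $[a,v]$ still contains the subinterval $[a,u]$ on which $f$ is allowed to vary.
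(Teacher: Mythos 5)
Your proof is correct, and while it rests on the same core reduction as the paper's---apply the pre-Gr\"{u}ss inequality \eqref{eq2.1} separately on $[a,v]$ and on $[u,b]$, then pass to the second bound by termwise AM--GM---your route to the first inequality differs in its mechanics and is the tidier of the two. The paper squares the difference, bounds $\bigl(\mathcal{T}_a^v(f,g)\bigr)^2$ and $\bigl(\mathcal{T}_u^b(f,g)\bigr)^2$ via \eqref{eq2.1}, and then factors the result as a perfect square; this silently requires bounding the cross term $2\,\mathcal{T}_a^v(f,g)\,\mathcal{T}_u^b(f,g)$ by $2\bigl(\mathcal{T}_a^v(f,f)\,\mathcal{T}_a^v(g,g)\,\mathcal{T}_u^b(f,f)\,\mathcal{T}_u^b(g,g)\bigr)^{1/2}$ (the displayed equality in the paper's chain should really be a ``$\le$''), and it tacitly uses \eqref{eq2.1} in absolute-value form, since $-2xy\le 2|x|\,|y|$ rather than $\le 2xy$. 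Your triangle-inequality decomposition avoids the squaring altogether, and you explicitly justify the absolute-value form of \eqref{eq2.1} via bilinearity ($f\mapsto -f$), closing a gap the paper leaves implicit. On sharpness you genuinely improve on the paper: its choice $f=g=x$ gives $\bigl|(v-a)^2-(b-u)^2\bigr|/12$ on the left against $\bigl[(v-a)^2+(b-u)^2\bigr]/12$ for the middle term, which is \emph{strict} for every admissible $a\le u<v\le b$ and becomes equality only in the degenerate limit $u=a$, $v\to a^+$ (essentially Remark 1); your example $f=g=\min\{t,u\}$, which annihilates $\mathcal{T}_u^b$ while keeping $\mathcal{T}_a^v(f,f)>0$, makes all three members of \eqref{eq2.2} equal to $\mathcal{T}_a^v(f,f)$ for fixed nondegenerate $u<v$, so it establishes sharpness in the strong sense of attained equality.
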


\begin{proof}
	Simply using the \eqref{eq2.1}, we have
	\begin{align*}
	&\left| {\mathcal{T}_a^v \left( {f,g} \right) - \mathcal{T}_u^b
		\left( {f,g} \right)} \right|^2
	\\
	&\le \left( {\mathcal{T}_a^v \left( {f,g} \right)} \right)^2  +
	2\mathcal{T}_a^v \left( {f,g} \right) \cdot \mathcal{T}_u^b \left(
	{f,g} \right) +  \left( {\mathcal{T}_u^b \left( {f,g} \right)}
	\right)^2
	\\
	&\le \left( {\mathcal{T}_a^v \left( {f,f} \right)} \right)\left(
	{\mathcal{T}_a^v \left( {g,g} \right)} \right) + 2\mathcal{T}_a^v
	\left( {f,g} \right) \cdot \mathcal{T}_u^b \left( {f,g} \right) +
	\left( {\mathcal{T}_u^b \left( {f,f} \right)} \right)\left(
	{\mathcal{T}_u^b \left( {g,g} \right)} \right)
	\\
	&= \left[ {\left( {\mathcal{T}_a^v \left( {f,f} \right)}
		\right)^{1/2} \left( {\mathcal{T}_a^v \left( {g,g} \right)}
		\right)^{1/2}  + \left( {\mathcal{T}_u^b \left( {f,f} \right)}
		\right)^{1/2} \left( {\mathcal{T}_u^b \left( {g,g} \right)}
		\right)^{1/2} } \right]
	\\
	&\qquad\qquad\qquad\times \left( {\mathcal{T}_a^v \left( {f,f}
		\right)} \right)^{1/2} \left( {\mathcal{T}_a^v \left( {g,g}
		\right)} \right)^{1/2}
	\\
	&\qquad+ \left[ {\left( {\mathcal{T}_u^b \left( {f,f} \right)}
		\right)^{1/2} \left( {\mathcal{T}_u^b \left( {g,g} \right)}
		\right)^{1/2}  + \left( {\mathcal{T}_a^v \left( {f,f} \right)}
		\right)^{1/2} \left( {\mathcal{T}_a^v \left( {g,g} \right)}
		\right)^{1/2} } \right]
	\\
	&\qquad\qquad\qquad\times \left( {\mathcal{T}_u^b \left( {f,f}
		\right)} \right)^{1/2} \left( {\mathcal{T}_u^b \left( {g,g}
		\right)} \right)^{1/2}
	\\
	&= \left[ {\left( {\mathcal{T}_a^v \left( {f,f} \right)}
		\right)^{1/2} \left( {\mathcal{T}_a^v \left( {g,g} \right)}
		\right)^{1/2}  + \left( {\mathcal{T}_u^b \left( {f,f} \right)}
		\right)^{1/2} \left( {T_u^b \left( {g,g} \right)} \right)^{1/2} }
	\right]^2
	\end{align*}
	and this implies the first inequality in \eqref{eq2.2}. The second
	inequality follows by applying the AM--GM inequality.  The
	sharpness follows by letting $f=g=x$.
\end{proof}

\begin{remark}
	We note that \eqref{eq2.2} reduces to \eqref{eq2.1} by setting
	$u=a$ and $v=u +\epsilon$, thus
	$$\left\vert {\mathcal{T}_a^{v} \left( {f,g} \right) -
		\mathcal{T}_{u}^b \left( {f,g} \right)}\right\vert \longrightarrow
	\left\vert {\mathcal{T}_a^{b} \left( {f,g} \right)}\right\vert
	\,\,\,\,\,\, \text{as} \,\,\,\,\,\,\epsilon \longrightarrow 0^+
	.$$ Consequently, the right hand of \eqref{eq2.2}
	$\longrightarrow$ the right hand of \eqref{eq2.1}.
	
\end{remark}

\section{Bounds for bounded variation integrators}

The first result regarding bounded variation integrators is
presented as follows:
\begin{thm}
	\label{thm4.5.1}Let $f,g:[a,b]\rightarrow \mathbb{R}$ be such that
	$f$
	is of bounded variation on $[a,b]$ and $g$ is absolutely continuous on $%
	[a,b] $, then
	\begin{multline}
	\left\vert {\mathcal{T}_a^v \left( {f,g} \right) -
		\mathcal{T}_u^b \left(
		{f,g} \right)}\right\vert  \label{eq4.5.2} \\
	\leq \bigvee_{a}^{b}\left( {f}\right) \cdot \left\{
	\begin{array}{l}
	\frac{1}{8} \left[ {\frac{{\left( {v - a} \right) + \left( {b - u}
				\right)}}{2} + \left| {\frac{{b - u}}{2} - \frac{{v - a}}{2}}
		\right|} \right]  \left\Vert {g^{\prime }}\right\Vert
	_{\infty,[a,b] }, \,\,\,\,\,{\rm{if}}\,\,\,\,\,\,g^{\prime }\in
	L_{\infty }\left[ {a,b}\right] ; \\
	\\
	\frac{1}{{2\left( {q+1}\right) ^{1/q}
	}}\left[ {\frac{{b-a}}{2} + \left| {v - \frac{{a +
					b}}{2}} \right|} \right]\cdot \left\Vert {g^{\prime }}\right\Vert _{p,[a,b]}, \,\,\,\,\,\,\,\,\,\,\,\,{\rm{if}}\,\,\,\,\,\,g^{\prime }\in L_{p}\left[ {%
		a,b}\right], \\
	\\
	\\
	\frac{1}{2}\left\Vert {g^{\prime }}\right\Vert
	_{1,[a,b]},\,\,\,\,\,\,\,\,\,\,\,\,\,\,\,\,\,\,\,\,\,\,\,\,\,\,\,\,\,\,\,\,\,\,\,\,\,\,\,\,\,\,\,\,\,\,\,\,\,\,\,\,\,\,\,\,\,\,\,\,\,\,\,\,\,\,\,\,\,\,\,\,\,\,\,\,{\rm{if}}\,\,\,\,\,g^{\prime
	}\in L_{1}\left[ {a,b}\right],%
	\end{array}%
	\right.,
	\end{multline}
	for all $a\le u<v\le b$, where $\left\Vert {\cdot }\right\Vert
	_{p}$ are the usual Lebesgue norms,
	i.e.,%
	\begin{align*}
	\left\Vert {h}\right\Vert _{p}:=\left( \int_{a}^{b}\left\vert
	h\left( t\right) \right\vert ^{p}dt\right) ^{1/p},\text{ for
	}p\geq 1
	\end{align*}%
	and%
	\begin{align*}
	\left\Vert {h}\right\Vert _{\infty }:=ess\mathop {\sup
	}\limits_{t\in \left[ {a,b}\right] }\left\vert {h\left( t\right)
	}\right\vert .
	\end{align*}
\end{thm}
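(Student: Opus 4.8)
The plan is to represent the difference of the two Čebyšev functionals through the $\Psi_g$ representation given just before Theorem~\ref{thm4}, and then to reduce the whole problem to an estimate on the single kernel $\Psi_g(t;\cdot,\cdot)$ integrated against $df$. Concretely, using
\begin{align*}
\mathcal{T}_a^v(f,g) = -\frac{1}{v-a}\int_a^v \Psi_g(t;a,v)\,df(t),\qquad
\mathcal{T}_u^b(f,g) = -\frac{1}{b-u}\int_u^b \Psi_g(t;u,b)\,df(t),
\end{align*}
I would first write the difference $\mathcal{T}_a^v(f,g)-\mathcal{T}_u^b(f,g)$ as a single Riemann–Stieltjes integral against $df$ over $[a,b]$, with an integrand $K(t)$ that is the appropriate combination of $-\tfrac{1}{v-a}\Psi_g(t;a,v)$ on $[a,v]$ and $+\tfrac{1}{b-u}\Psi_g(t;u,b)$ on $[u,b]$ (being careful on the overlap $[u,v]$ and on the complementary pieces where only one functional contributes). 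Then the standard bound for a Stieltjes integral driven by a function of bounded variation,
\begin{align*}
\left|\int_a^b K(t)\,df(t)\right| \le \sup_{t\in[a,b]}|K(t)|\cdot \bigvee_a^b(f),
\end{align*}
immediately produces the factor $\bigvee_a^b(f)$ appearing in \eqref{eq4.5.2}. So the entire problem collapses to estimating $\sup_t |K(t)|$ in the three norm regimes.

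The heart of the matter is therefore a pointwise bound on $\Psi_g(t;\alpha,\beta)$. Observe that $\Psi_g(t;\alpha,\beta)=\int_\alpha^t g(s)\,ds-\frac{t-\alpha}{\beta-\alpha}\int_\alpha^\beta g(s)\,ds$ can itself be rewritten, after subtracting off the mean of $g$, as an integral of $g'(s)$ against a peaked kernel (the classical Peano-type kernel $p(t,s)$ equal to $s-\alpha$ for $s<t$ and $s-\beta$ for $s>t$, suitably scaled). For each of the three cases I would apply Hölder's inequality to this inner integral: the $L_\infty$ norm of $g'$ pairs with the $L_1$ norm of the Peano kernel (giving the $\tfrac18$ constant familiar from the Ostrowski-type estimate already quoted in the last line of Theorem~\ref{thm1}), the $L_p$ norm of $g'$ pairs with the $L_q$ norm of the kernel (producing the $(q+1)^{-1/q}$ factor), and the $L_1$ norm of $g'$ pairs with the $L_\infty$ norm of the kernel (giving the $\tfrac12$). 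These are exactly the three known one-interval Ostrowski/Grüss-type bounds, so I expect to be able to invoke them essentially verbatim on each subinterval rather than rederiving them.

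The genuinely delicate step is the bookkeeping that converts the two single-interval bounds into the \emph{combined} geometric factors displayed in \eqref{eq4.5.2}, namely the expressions $\tfrac12\big[\tfrac{(v-a)+(b-u)}{2}+|\tfrac{b-u}{2}-\tfrac{v-a}{2}|\big]$ and $\tfrac{b-a}{2}+|v-\tfrac{a+b}{2}|$. The quantity $\tfrac{X+Y}{2}+|\tfrac{X-Y}{2}|$ is just $\max\{X,Y\}$, so these factors are really $\max$-type expressions arising from taking the supremum of $|K(t)|$ over the two overlapping subintervals $[a,v]$ and $[u,b]$; the main obstacle is to verify carefully that on the overlap $[u,v]$ the two contributions to $K$ combine so that the supremum is controlled by the larger of the two single-interval bounds, and that the endpoint normalizations $1/(v-a)$ and $1/(b-u)$ are absorbed correctly to yield exactly these closed forms. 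I would handle the three cases in parallel, treating the $L_1$ case first (where the kernel sup is simplest and the normalizations cancel to give the clean constant $\tfrac12$) as a sanity check before pushing through the $L_\infty$ and $L_p$ cases.
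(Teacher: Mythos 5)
Your skeleton coincides with the paper's proof in all essentials: the paper likewise reduces everything to the bound $\left|\int_a^b w\,d\nu\right|\le \sup_{[a,b]}|w|\cdot\bigvee_a^b(\nu)$ (its \eqref{eq4.3.17}) and then invokes the one-interval means-comparison inequalities \eqref{Bar4.3.1} and \eqref{Cer4.3.2} verbatim with $d=r,t$ and $c=a,u$, exactly as you propose. The only structural difference is that the paper applies the triangle inequality first and estimates the two Stieltjes integrals separately (retaining $\bigvee_a^v(f)$ and $\bigvee_u^b(f)$ until the last step), whereas you fuse them into a single kernel $K$ on $[a,b]$ — which is legitimate, since $\Psi_g(t;\alpha,\beta)$ vanishes at $t=\alpha$ and $t=\beta$, so $K$ is continuous — but at the level of the estimates you then perform, the two routes give the same output.

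The genuine gap is the step you explicitly defer: that on the overlap $[u,v]$ the supremum of $|K|$ is ``controlled by the larger of the two single-interval bounds.'' With the tools you list (Peano kernel plus H\"older, then a sup over $t$, then the triangle inequality on $K$) you can only obtain the \emph{sum}: in the $L_\infty$ case, $\sup|K|\le \frac{v-a}{8}\|g'\|_\infty+\frac{b-u}{8}\|g'\|_\infty$, i.e.\ the factor $\frac{(v-a)+(b-u)}{8}$ rather than the stated $\frac18\max\{v-a,b-u\}$, and analogously in the $L_p$ case. Passing from the sum to the max would require quantitative cancellation between $-\Psi_g(t;a,v)/(v-a)$ and $+\Psi_g(t;u,b)/(b-u)$ on $[u,v]$, for which you give no argument (the limit $u\to a$, $v\to b$, where $K\to 0$, shows cancellation occurs, but no uniform version is established). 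In fairness, this is also the weakest point of the paper's own proof: from its \eqref{eq4.5.7}, the sum $\frac18(v-a)\|g'\|_{\infty,[a,v]}\bigvee_a^v(f)+\frac18(b-u)\|g'\|_{\infty,[u,b]}\bigvee_u^b(f)$ is majorized by $\frac18\max\{v-a,b-u\}\|g'\|_{\infty,[a,b]}\bigvee_a^b(f)$ only if $\bigvee_a^v(f)+\bigvee_u^b(f)\le\bigvee_a^b(f)$, which fails for overlapping intervals (variation on $[u,v]$ is counted twice); likewise the $L_p$ factor $\frac{b-a}{2}+\left|v-\frac{a+b}{2}\right|=\max\{v-a,b-v\}$ does not dominate $\frac{(v-a)+(b-u)}{2}$ in general (try $a=0$, $b=1$, $u=0.1$, $v=0.5$). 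So your $L_1$ case does go through as you predict ($\frac14+\frac14$ gives the stated $\frac12$), but your plan as described proves the $L_\infty$ and $L_p$ lines only with sum-type geometric factors; the reduction to the stated max-type factors is missing from your proposal, and is not validly supplied by the paper either.
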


\begin{proof}
	It is known that for a continuous function $w$ on $[a,b]$ and a
	bounded variation $\nu$ on $[a,b]$, one have the inequality
	\begin{align}
	\label{eq4.3.17}\left| {\int_a^b {w\left( t \right)d\nu\left( t
			\right)} } \right| \le  \mathop {\sup }\limits_{t \in \left[ {a,b}
		\right]} \left|{w\left( t \right)}\right|
	\bigvee_a^b\left({\nu}\right).
	\end{align}
	Employing \eqref{eq4.3.17} for the Cerone-Dragomir identity
	\begin{align}
	\mathcal{T}\left( {f,g} \right) =  - \frac{1}{{b - a}}\int_a^b
	{\left( {\int_a^t {g\left( s \right)ds}  - \frac{{t - a}}{{b -
					a}}\int_a^b {g\left( s \right)ds} } \right)df\left( t \right)}.
	\label{eq4.5.3}
	\end{align}%
	One has as $f$ is of bounded variation on $[a,b]$,
	\begin{align}
	&\left\vert {\mathcal{T}_a^v \left( {f,g} \right) -
		\mathcal{T}_u^b \left( {f,g} \right)}\right\vert
	\nonumber\\
	& = \left| { \frac{1}{{v - a}}\int_a^v {\left( {\int_a^t {g\left(
					s \right)ds}  - \frac{{t - a}}{{v - a}}\int_a^v {g\left( s
					\right)ds} } \right)df\left( t \right)} } \right.
	\nonumber\\
	&\qquad\left. {- \frac{1}{{b - u}}\int_u^b {\left( {\int_u^r
				{g\left( s \right)ds}  - \frac{{r - u}}{{b - u}}\int_u^b {g\left(
					s \right)ds} } \right)df\left( t \right)} } \right|\nonumber
	\nonumber\\
	&\le \left| {\frac{1}{{v - a}}\int_a^v {\left( {\int_a^t {g\left(
					s \right)ds}  - \frac{{t - a}}{{v - a}}\int_a^v {g\left( s
					\right)ds} } \right)df\left( t \right)}  } \right|\nonumber
	\\
	&\qquad+ \left| {\frac{1}{{b - u}}\int_u^b {\left( {\int_u^r
				{g\left( s \right)ds}  - \frac{{r - u}}{{b - u}}\int_u^b {g\left(
					s \right)ds} } \right)df\left( t \right)} } \right|\nonumber
	\\
	&\le\frac{1}{{v - a}}\mathop {\sup }\limits_{t \in \left[ {a,v}
		\right]} \left| { \int_a^v {\left( {\int_a^t {g\left( s \right)ds}
				- \frac{{t - a}}{{v - a}}\int_a^v {g\left( s \right)ds} }
			\right)dt} } \right| \cdot \bigvee_a^v \left( f \right) \nonumber
	\\
	&\qquad+ \frac{1}{{b - u}}\mathop {\sup }\limits_{r \in \left[
		{u,b} \right]} \left| { \int_u^b {\left( {\int_u^r {g\left( s
					\right)ds}  - \frac{{r - u}}{{b - u}}\int_u^b {g\left( s
					\right)ds} } \right)dt}  } \right| \cdot \bigvee_u^b \left( f
	\right)\label{eq4.5.4}
	\end{align}%
	In the inequality (\ref{Bar4.3.1}), setting $d=t$, $c=a$ and then
	$d=r$, $c=u$, we get
	\begin{align}
	\left\vert {\frac{1}{{t-a}}\int_{a}^{t}{g\left( s\right) ds}-\frac{1}{{v-a}}%
		\int_{a}^{v}{g\left( s\right) ds}}\right\vert \leq \frac{1}{2}\left( {v-t}%
	\right) \left\Vert {g^{\prime }}\right\Vert _{\infty,[a,v] }
	\label{eq4.5.5}
	\end{align}
	and
	\begin{align}
	\label{eq4.5.6} \left\vert {\frac{1}{{r-u}}\int_{u}^{r}{g\left( s\right) ds}-\frac{1}{{b-u}}%
		\int_{u}^{b}{g\left( s\right) ds}}\right\vert \leq \frac{1}{2}\left( {b-r}%
	\right) \left\Vert {g^{\prime }}\right\Vert _{\infty,[u,b] }.
	\end{align}
	Substituting (\ref{eq4.5.5}) and (\ref{eq4.5.6}) in
	(\ref{eq4.5.4}), we get
	\begin{align}
	\left\vert {\mathcal{T}_a^v \left( {f,g} \right) - \mathcal{T}_u^b
		\left( {f,g} \right)}\right\vert & \leq \frac{1}{{v - a}}\cdot\frac{1}{2}\left\Vert {g^{\prime }}%
	\right\Vert _{\infty,[a,v] }\mathop {\sup }\limits_{t\in \left[
		{a,v}\right] }\left\{ {\left( {t-a}\right) \left( {v-t}\right)
	}\right\} \bigvee_{a}^{v}\left( {f}\right)
	\nonumber\\
	&\qquad+
	\frac{1}{{b - u}}\cdot\frac{1}{2}\left\Vert {g^{\prime }}%
	\right\Vert _{\infty,[u,b] }\mathop {\sup }\limits_{r\in \left[
		{u,b}\right] }\left\{ {\left( {r-u}\right) \left( {b-r}\right)
	}\right\}\bigvee_{u}^{b}\left( {f}\right)
	\nonumber\\
	& =\frac{1}{8}\left( {v-a}\right)\left\Vert {g^{\prime
	}}\right\Vert _{\infty,[a,v] }\bigvee_{a}^{v}\left( {f}\right)+
	\frac{1}{8}\left( {b-u}\right)\left\Vert {g^{\prime }}\right\Vert
	_{\infty,[u,b] }\bigvee_{u}^{b}\left( {f}\right) \label{eq4.5.7}
	\\
	&\le \frac{1}{8} \max \left\{ {\left( {v - a} \right),\left( {b -
			u} \right)} \right\} \left\Vert {g^{\prime }}\right\Vert
	_{\infty,[a,b] }\bigvee_{a}^{b}\left( {f}\right)
	\nonumber\\
	&\le \frac{1}{8} \left[ {\frac{{\left( {v - a} \right) + \left( {b
					- u} \right)}}{2} + \left| {\frac{{b - u}}{2} - \frac{{v - a}}{2}}
		\right|} \right]  \left\Vert {g^{\prime }}\right\Vert
	_{\infty,[a,b] }\bigvee_{a}^{b}\left( {f}\right)\nonumber
	\end{align}%
	where we used the fact that $\mathop {\sup }\limits_{t\in \left[
		{\alpha,\beta}\right] }\left\{ {\left( {t-\alpha }\right) \left(
		{\beta-t}\right) }\right\} $, occurs at
	$t=\frac{\alpha+\beta}{2}$,
	therefore, $\mathop {\sup }\limits_{t\in \left[ {\alpha,\beta}\right] }\left\{ {%
		\left( {t-\alpha}\right) \left( {\beta-t}\right) }\right\} =\frac{1}{4}\left( {\beta-\alpha}%
	\right) ^{2}$. Also, we note that the last inequality holds since
	\begin{align*}
	\left\Vert
	{g^{\prime }}\right\Vert _{\infty,[a,v] }\le \left\Vert {g^{\prime
	}}\right\Vert _{\infty,[a,b] },\,\,\,\bigvee_{a}^{v}\left(
	{f}\right) \le \bigvee_{a}^{b}\left(
	{f}\right)\,\,\,\text{and}\,\,\,\bigvee_{u}^{b}\left( {f}\right)
	\le \bigvee_{a}^{b}\left( {f}\right),
	\end{align*}
	which proves the first inequality in (\ref{eq4.5.2}).
	
	In the inequality (\ref{Cer4.3.2}), replace $r,u$ instead of
	$d,c$; respectively and then $t,a$ instead of $d,c$; respectively,
	we find that
	\begin{align}
	& \left\vert {\frac{1}{{r-a}}\int_{a}^{r}{g\left( s\right) ds}-\frac{1}{{v-a}%
		}\int_{a}^{v}{g\left( s\right) ds}}\right\vert  \label{eq4.5.8} \\
	& \leq \left\{
	\begin{array}{l}
	\frac{\left( {v-r}\right)^{\frac{1}{q}}}{{\left( {q+1}%
			\right) ^{1/q}\left( {v-a}\right) ^{\frac{1}{q}}}}\left[ {\left(
		{r - a} \right)^q  + \left( {v - r} \right)^q } \right]^{1/q}
	\left\Vert {%
		g^{\prime }}\right\Vert _{p,[a,v]},\,\,\,\,\,\,g^{\prime }\in L_{p}\left[ {a,v}%
	\right] , \\
	\\
	\frac{{v-r}}{{v-a}}\left\Vert {g^{\prime }}\right\Vert
	_{1,[a,v]},\,\,\,\,\,\,\,\,\,\,\,\,\,\,\,\,g^{\prime }\in
	L_{1}\left[ {a,v}\right].
	\end{array}%
	\right.  \nonumber
	\end{align}%
	and
	\begin{align}
	& \left\vert {\frac{1}{{t-u}}\int_{u}^{t}{g\left( s\right) ds}-\frac{1}{{b-a}%
		}\int_{u}^{b}{g\left( s\right) ds}}\right\vert  \label{eq4.5.9} \\
	& \leq \left\{
	\begin{array}{l}
	\frac{\left( {b-t}\right)^{\frac{1}{q}}}{{\left( {q+1}%
			\right) ^{1/q}\left( {b-u}\right) ^{\frac{1}{q}}}}\left[ {\left(
		{t - u} \right)^q  + \left( {b - t} \right)^q } \right]^{1/q}
	\left\Vert {%
		g^{\prime }}\right\Vert _{p,[u,b]},\,\,\,\,\,\,g^{\prime }\in L_{p}\left[ {u,b}%
	\right] , \\
	\\
	\frac{{b-t}}{{b-u}}\left\Vert {g^{\prime }}\right\Vert
	_{1,[u,b]},\,\,\,\,\,\,\,\,\,\,\,\,\,\,\,\,g^{\prime }\in
	L_{1}\left[ {u,b}\right]
	\end{array}%
	\right.  \nonumber
	\end{align}%
	Substituting (\ref{eq4.5.8}) and (\ref{eq4.5.9}) in
	(\ref{eq4.5.4}), we have respectively
	\begin{align}
	&\frac{1}{v-a}  \mathop {\sup }\limits_{r \in \left[ {a,v}
		\right]} \left( {r-a}\right) \left| {\frac{1}{{r-a}}%
		\int_{a}^{r}{g\left( s\right)
			ds}-\frac{1}{{v-a}}\int_{a}^{v}{g\left( s\right) ds}}\right|\nonumber \\
	& \leq \frac{1}{(v-a)}\left\{
	\begin{array}{l}
	\frac{{\left\Vert {g^{\prime }}\right\Vert _{p,[a,v]}}}{{\left(
			{q+1}\right) ^{1/q}\left( {v-a}\right) ^{1/q}}} \mathop {\sup
	}\limits_{t \in \left[ {a,v}
		\right]}\left\{\left( {r-a}\right) \left( {%
		v-r}\right) ^{{\textstyle{\frac{1}{q}}}}\left[ {\left( {r - a}
		\right)^q  + \left( {v - r}
		\right)^q } \right]^{1/q}\right\},\,\,g^{\prime }\in L_{p}\left[ {a,v}%
	\right] , \\ \\
	\\
	\frac{\left\Vert {g^{\prime }}\right\Vert _{1,[a,v]}}{{v-a}}
	\mathop {\sup }\limits_{r \in \left[ {a,v}
		\right]}\left( {r-a}\right) \left( {v-r}\right),\,\,\,\,\,\,\,\,\,\,\,\,\,\,\,%
	\,g^{\prime }\in L_{1}\left[ {a,v}\right] ,%
	\end{array}%
	\right. \nonumber\\
	& =\left\{
	\begin{array}{l}
	\frac{\left( {v-a}\right)}{{4\left( {q+1}\right) ^{1/q}
	}}\cdot \left\Vert {g^{\prime }}\right\Vert _{p,[a,v]},\,\,\,\,\,\,\,\,\,\,\,\,\,\,\,g^{\prime }\in L_{p}\left[ {%
		a,v}\right], \\
	\\
	\\
	\frac{1}{4}\left\Vert {g^{\prime }}\right\Vert
	_{1,[a,v]},\,\,\,\,\,\,\,\,\,\,\,\,\,\,\,\,\,\,\,\,\,\,\,\,\,\,\,\,\,\,\,\,\,\,\,\,g^{\prime
	}\in L_{1}\left[
	{a,v}\right] , \\
	\end{array}%
	\right.,\label{eq4.5.10}
	\end{align}%
	and similarly, we have
	\begin{align}
	&\frac{1}{b-u}  \mathop {\sup }\limits_{r \in \left[ {u,b}
		\right]} \left( {r-u}\right) \left| {\frac{1}{{t-u}}%
		\int_{u}^{t}{g\left( s\right)
			ds}-\frac{1}{{b-u}}\int_{u}^{b}{g\left( s\right) ds}}\right|\label{eq4.4.11}\\
	& \leq  \left\{
	\begin{array}{l}
	\frac{\left( {b-u}\right)}{{4\left( {q+1}\right) ^{1/q}
	}}\cdot \left\Vert {g^{\prime }}\right\Vert _{p,[u,b]},\,\,\,\,\,\,\,\,\,\,\,\,\,\,\,g^{\prime }\in L_{p}\left[ {%
		u,b}\right], \\
	\\
	\\
	\frac{1}{4}\left\Vert {g^{\prime }}\right\Vert
	_{1,[u,b]},\,\,\,\,\,\,\,\,\,\,\,\,\,\,\,\,\,\,\,\,\,\,\,\,\,\,\,\,\,\,\,\,\,\,\,\,g^{\prime
	}\in L_{1}\left[
	{u,b}\right]. \\
	\end{array}%
	\right.\nonumber
	\end{align}%
	Adding (\ref{eq4.5.10}) and (\ref{eq4.4.11}),  we get
	\begin{align*}
	&\left\vert {\mathcal{T}_a^v \left( {f,g} \right) -
		\mathcal{T}_u^b \left( {f,g} \right)}\right\vert
	\\
	&\le\left\{
	\begin{array}{l}
	\frac{\left( {v-a}\right)}{{4\left( {q+1}\right) ^{1/q}
	}}\cdot \left\Vert {g^{\prime }}\right\Vert _{p,[a,v]}\bigvee_{a}^{v}\left( {f}\right)+\frac{\left( {b-u}\right)}{{4\left( {q+1}\right) ^{1/q}
	}}\cdot \left\Vert {g^{\prime }}\right\Vert _{p,[u,b]}\bigvee_{u}^{b}\left( {f}\right), \,\,\,\,\,\,\,\,\,g^{\prime }\in L_{p}\left[ {%
		u,b}\right], \\
	\\
	\\
	\frac{1}{4}\left\Vert {g^{\prime }}\right\Vert
	_{1,[a,v]}\bigvee_{a}^{v}\left( {f}\right)+\frac{1}{4}\left\Vert
	{g^{\prime }}\right\Vert _{1,[u,b]}\bigvee_{u}^{b}\left(
	{f}\right),\,\,\,\,\,\,\,\,\,\,\,\,\,\,\,\,\,\,\,\,\,\,\,\,\,\,\,\,\,\,\,\,\,\,\,\,g^{\prime
	}\in L_{1}\left[ {u,b}\right] ,
	\end{array}%
	\right.
	\\
	\\
	&\le\left\{
	\begin{array}{l}
	\frac{1}{{2\left( {q+1}\right) ^{1/q}
	}}\left[ {\frac{{b-a}}{2} + \left| {v - \frac{{a +
					b}}{2}} \right|} \right]\cdot \left\Vert {g^{\prime }}\right\Vert _{p,[a,b]}\bigvee_{a}^{b}\left( {f}\right), \,\,\,\,\,\,\,\,\,g^{\prime }\in L_{p}\left[ {%
		a,b}\right], \\
	\\
	\\
	\frac{1}{2}\left\Vert {g^{\prime }}\right\Vert
	_{1,[a,b]}\bigvee_{a}^{b}\left(
	{f}\right),\,\,\,\,\,\,\,\,\,\,\,\,\,\,\,\,\,\,\,\,\,\,\,\,\,\,\,\,\,\,\,\,\,\,\,\,g^{\prime
	}\in L_{1}\left[ {a,b}\right],
	\end{array}%
	\right.
	\end{align*}
	which proves the second and the third inequalities in
	(\ref{eq4.5.2})
\end{proof}

\begin{cor}
	Under the assumptions of Theorem \ref{thm4.5.1}, we have
	\begin{align}
	& \left\vert {\mathcal{T}_a^u \left( {f,g} \right) -
		\mathcal{T}_u^b \left(
		{f,g} \right)}\right\vert  \label{eq4.5.12} \\
	& \leq \bigvee_{a}^{b}\left( {f}\right) \cdot \left\{
	\begin{array}{l}
	\frac{1}{8} \left[ {\frac{{b-a}}{2} + \left| {u - \frac{{a +
					b}}{2}} \right|} \right]  \left\Vert {g^{\prime }}\right\Vert
	_{\infty,[a,b] },
	\,\,\,\,\,\,\,\,\,\,\,\,\,\,\,\,\,\,\,{\rm{if}}\,\,\,\,\,\,g^{\prime
	}\in
	L_{\infty }\left[ {a,b}\right] ; \\
	\\
	\frac{1}{{2\left( {q+1}\right) ^{1/q}
	}}\left[ {\frac{{b-a}}{2} + \left| {u - \frac{{a +
					b}}{2}} \right|} \right] \left\Vert {g^{\prime }}\right\Vert _{p,[a,b]},\,\,\,\,\,{\rm{if}}\,\,\,\,\,\,g^{\prime }\in L_{p}\left[ {%
		a,b}\right], \\
	\\
	\frac{1}{2}\left\Vert {g^{\prime }}\right\Vert
	_{1,[a,b]},\,\,\,\,\,\,\,\,\,\,\,\,\,\,\,\,\,\,\,\,\,\,\,\,\,\,\,\,\,\,\,\,\,\,\,\,\,\,\,\,\,\,\,\,\,\,\,\,\,\,\,\,\,\,\,\,\,\,\,\,\,\,\,\,\,\,\,{\rm{if}}\,\,\,\,\,g^{\prime
	}\in L_{1}\left[ {a,b}\right].
	\end{array}%
	\right.  \nonumber
	\end{align}
	for all $a\le u\le b$. In particular case if $u=\frac{a+b}{2}$, we
	get
	\begin{align}
	& \left\vert {\mathcal{T}_a^{\frac{a+b}{2}} \left( {f,g} \right) -
		\mathcal{T}_{\frac{a+b}{2}}^b \left(
		{f,g} \right)}\right\vert  \label{eq4.5.13} \\
	& \leq \bigvee_{a}^{b}\left( {f}\right) \cdot \left\{
	\begin{array}{l}
	\frac{b-a}{16}    \left\Vert {g^{\prime }}\right\Vert
	_{\infty,[a,b] },
	\,\,\,\,\,\,\,\,\,\,\,\,\,\,\,\,\,\,\,\,\,{\rm{if}}\,\,\,\,\,g^{\prime
	}\in
	L_{\infty }\left[ {a,b}\right] ; \\
	\\
	\frac{b-a}{{4\left( {q+1}\right) ^{1/q}
	}} \cdot \left\Vert {g^{\prime }}\right\Vert _{p,[a,b]},    \,\,\,\,\,\,\,\,\,{\rm{if}}\,\,\,\,\,\,g^{\prime }\in L_{p}\left[ {%
		a,b}\right], \\
	\\
	\\
	\frac{1}{2}\left\Vert {g^{\prime }}\right\Vert _{1,[a,b]},
	\,\,\,\,\,\,\,\,\,\,\,\,\,\,\,\,\,\,\,\,\,\,\,\,\,\,\,\,\,\,{\rm{if}}\,\,\,\,\,g^{\prime
	}\in L_{1}\left[ {a,b}\right].
	\end{array}%
	\right.  \nonumber
	\end{align}
\end{cor}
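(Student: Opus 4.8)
The plan is to obtain both assertions as direct specializations of Theorem \ref{thm4.5.1}; no new machinery is required. First I would set $v=u$ in the three bounds of \eqref{eq4.5.2}. Strictly speaking, Theorem \ref{thm4.5.1} is stated for $a\le u<v\le b$, so I would point out that its proof applies verbatim on the diagonal $v=u$ whenever $a<u<b$: the splitting performed in \eqref{eq4.5.4} decomposes $[a,b]$ at the interior point $u$ into $[a,u]$ and $[u,b]$, and the ingredient estimates \eqref{Bar4.3.1}--\eqref{Cer4.3.2} are applied on these two subintervals without ever using the strict inequality $u<v$. (Alternatively, one may simply pass to the limit $v\to u^{+}$, since both sides are continuous in $v$.)

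Next I would carry out the elementary simplification of the sup-norm bound. Putting $v=u$ in the first line of \eqref{eq4.5.2}, the averaging term collapses because $(u-a)+(b-u)=b-a$, giving $\frac{(v-a)+(b-u)}{2}=\frac{b-a}{2}$, while the discrepancy term simplifies through
\begin{align*}
\frac{b-u}{2}-\frac{v-a}{2}=\frac{b-u}{2}-\frac{u-a}{2}=\frac{(a+b)-2u}{2}=\frac{a+b}{2}-u,
\end{align*}
so that $\bigl|\frac{b-u}{2}-\frac{v-a}{2}\bigr|=\bigl|u-\frac{a+b}{2}\bigr|$. This reproduces the first line of \eqref{eq4.5.12}. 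The $L_p$ bound (second line) of \eqref{eq4.5.2} involves $v$ only through the factor $\bigl|v-\frac{a+b}{2}\bigr|$, so replacing $v$ by $u$ yields the second line of \eqref{eq4.5.12} at once, and the $L_1$ bound (third line) is independent of $v$ and transcribes unchanged.

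Finally, for \eqref{eq4.5.13} I would substitute $u=\frac{a+b}{2}$ into \eqref{eq4.5.12}. The discrepancy term then vanishes, $\bigl|u-\frac{a+b}{2}\bigr|=0$, so each bracket reduces to $\frac{b-a}{2}$; absorbing this into the respective prefactors produces the constants $\frac{b-a}{16}$ in the sup-norm case, $\frac{b-a}{4(q+1)^{1/q}}$ in the $L_p$ case, and $\frac{1}{2}$ in the $L_1$ case (the last being unaffected). The only point that demands a moment's care is the justification of evaluating Theorem \ref{thm4.5.1} on the diagonal $v=u$, addressed in the first paragraph; everything else is routine arithmetic.
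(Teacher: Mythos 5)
Your proposal is correct and is essentially the paper's own argument: the paper likewise obtains \eqref{eq4.5.12} by specializing Theorem \ref{thm4.5.1}, setting $v=u+\epsilon$ and letting $\epsilon\to 0^+$ --- precisely the limiting alternative you offer --- and then derives \eqref{eq4.5.13} by substituting $u=\frac{a+b}{2}$. Your additional justification that the proof of Theorem \ref{thm4.5.1} applies verbatim on the diagonal $v=u$ for $a<u<b$, together with the explicit simplification $\bigl|\frac{b-u}{2}-\frac{u-a}{2}\bigr|=\bigl|u-\frac{a+b}{2}\bigr|$, is if anything slightly more careful than the paper's one-line limit argument.
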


\begin{proof}
	In Theorem \ref{thm4.5.1}, let $\epsilon>0$ and set $v=u+\epsilon$
	so as $\epsilon \to 0^+$ we get the required result.
\end{proof}

Another result when $g$ is of $r$-$H$--H\"{o}lder type is as
follows:
\begin{thm}
	\label{thm4.5.3} Let $f,g:[a,b]\rightarrow \mathbb{R}$ be such
	that $f$ is of bounded variation on $[a,b]$ and $g$ is of
	$p$-$H$--H\"{o}lder type on $[a,b]$, for $p\in(0,1]$ and $H>0$ are
	given. Then
	\begin{align}
	\left\vert {\mathcal{T}_a^{v} \left( {f,g} \right) -
		\mathcal{T}_{u}^b \left( {f,g} \right)}\right\vert
	\label{eq4.5.14}   \le  H\frac{{\left( {v - a} \right)^p  + \left(
			{b - u} \right)^p }}{{2^{p + 1} \left( {p + 1} \right)}}
	\bigvee_{a}^{b}\left( {%
		f}\right),
	\end{align}
	and
	\begin{align}
	\left\vert {\mathcal{T}_a^{v} \left( {f,g} \right) -
		\mathcal{T}_{u}^b \left( {f,g} \right)}\right\vert
	\label{eq4.5.15}
	\le \frac{{H}}{{2^{p} \left( {p + 1} \right)}} \left[
	{\frac{{\left( {v - a} \right) + \left( {b - u} \right)}}{2} +
		\left| {\frac{{v - a}}{2} - \frac{{b - u}}{2}} \right|} \right]^p
	\cdot\bigvee_{a}^{b}\left( { f}\right),
	\end{align}
	for all $a\le u<v\le b$.
\end{thm}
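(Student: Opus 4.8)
The plan is to follow the proof of Theorem~\ref{thm4.5.1} line for line, only replacing the $L_\infty$/$L_p$ estimates of the difference of two integral means by the H\"{o}lder estimate \eqref{Cer4.3.3}. First I would apply the Cerone--Dragomir representation \eqref{eq4.5.3} on each of the intervals $[a,v]$ and $[u,b]$, use the triangle inequality, and invoke the bounded variation bound \eqref{eq4.3.17} exactly as in \eqref{eq4.5.4}. This absorbs the integrator $f$ into the factors $\bigvee_a^v(f)$ and $\bigvee_u^b(f)$ and reduces everything to the suprema of the two kernels, leaving
\[
\left|\mathcal{T}_a^v(f,g)-\mathcal{T}_u^b(f,g)\right| \le \frac{1}{v-a}\sup_{t\in[a,v]}\left|\Psi_g(t;a,v)\right|\bigvee_a^v(f) + \frac{1}{b-u}\sup_{r\in[u,b]}\left|\Psi_g(r;u,b)\right|\bigvee_u^b(f).
\]

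The core step is to bound the kernel. Writing
\[
\Psi_g(t;a,v) = (t-a)\left[\frac{1}{t-a}\int_a^t g(s)\,ds - \frac{1}{v-a}\int_a^v g(s)\,ds\right]
\]
and applying \eqref{Cer4.3.3} with outer interval $[a,v]$ and inner interval $[a,t]$ (so that the two boundary contributions reduce to $0^{p+1}$ and $(v-t)^{p+1}$), the bracket is at most $\frac{H(v-t)^p}{p+1}$; hence $\left|\Psi_g(t;a,v)\right| \le \frac{H}{p+1}(t-a)(v-t)^p$, and symmetrically $\left|\Psi_g(r;u,b)\right| \le \frac{H}{p+1}(r-u)(b-r)^p$. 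I would then maximise the elementary function $t\mapsto(t-a)(v-t)^p$ over $[a,v]$. In the same spirit as the computation $\sup(t-\alpha)(\beta-t)=\tfrac14(\beta-\alpha)^2$ used in Theorem~\ref{thm4.5.1}, evaluating at the midpoint $t=\tfrac{a+v}{2}$ gives the value $\left(\tfrac{v-a}{2}\right)^{p+1}$, so that $\sup_{t\in[a,v]}\left|\Psi_g(t;a,v)\right|\le \frac{H(v-a)^{p+1}}{2^{p+1}(p+1)}$, and likewise on $[u,b]$.

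Dividing by $v-a$ and $b-u$ respectively, inserting into the split above, and bounding $\bigvee_a^v(f),\bigvee_u^b(f)\le\bigvee_a^b(f)$ would yield the sum form \eqref{eq4.5.14}. To pass to \eqref{eq4.5.15} I would estimate each of $(v-a)^p,(b-u)^p$ by $\max\{v-a,b-u\}^p$, which turns the factor $\frac{1}{2^{p+1}}$ into $\frac{1}{2^{p}}$, and finally rewrite $\max\{v-a,b-u\}=\frac{(v-a)+(b-u)}{2}+\left|\frac{v-a}{2}-\frac{b-u}{2}\right|$. The step I expect to be delicate is precisely the maximisation of $(t-a)(v-t)^p$: its stationary point is $t-a=\frac{v-a}{p+1}$, which coincides with the midpoint only when $p=1$, and for $p\in(0,1)$ the genuine maximum $\frac{p^p(v-a)^{p+1}}{(p+1)^{p+1}}$ strictly exceeds the midpoint value. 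Consequently, reproducing the stated constant $\frac{1}{2^{p+1}(p+1)}$ hinges on either justifying the midpoint evaluation (as the author does in the $L_\infty$ case) or replacing the one-sided estimate of the mean difference by the sharper symmetric estimate of $\frac{1}{t-a}\int_a^t g-\frac{1}{v-t}\int_t^v g$ obtained directly from the H\"{o}lder condition; I would verify this constant carefully before finalising the bound.
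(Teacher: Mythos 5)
Your skeleton is exactly the paper's proof of Theorem \ref{thm4.5.3}: the identity \eqref{eq4.5.3} on each of $[a,v]$ and $[u,b]$, the triangle inequality, the bounded-variation bound \eqref{eq4.3.17} as in \eqref{eq4.5.4}, and then \eqref{Cer4.3.3} with $c=a$, $d=r$ to obtain $\left|\Psi_g\left({r;a,v}\right)\right|\le\frac{H}{p+1}\left({r-a}\right)\left({v-r}\right)^p$. The delicate point you isolate at the end is precisely where the paper's own argument goes wrong: the paper passes from $\frac{1}{v-a}\cdot\frac{H}{p+1}\sup_{r\in[a,v]}\left({r-a}\right)\left({v-r}\right)^p$ directly to $H\frac{(v-a)^p}{2^{p+1}(p+1)}$, i.e.\ it evaluates the supremum at the midpoint. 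As you observe, for $p\in(0,1)$ the supremum is attained at $r-a=\frac{v-a}{p+1}$ and equals $\frac{p^p(v-a)^{p+1}}{(p+1)^{p+1}}$, which strictly exceeds the midpoint value $\frac{(v-a)^{p+1}}{2^{p+1}}$ (for $p=\tfrac12$: $0.385\ldots$ versus $0.354\ldots$). Since a supremum must be bounded from \emph{above}, the one-sided route --- yours or the paper's --- honestly yields only the weaker constant $\frac{p^p}{(p+1)^{p+2}}$ in place of $\frac{1}{2^{p+1}(p+1)}$, and therefore does not prove \eqref{eq4.5.14} as stated except at $p=1$.

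Your fallback via the symmetric difference of means is the correct repair, and it should be executed rather than left as a caveat. Writing
\begin{align*}
\Psi_g\left({t;a,v}\right) = \frac{\left({v-t}\right)\int_a^t g\left(s\right)ds-\left({t-a}\right)\int_t^v g\left(\sigma\right)d\sigma}{v-a}
= \frac{1}{v-a}\int_a^t{\int_t^v{\left[{g\left(s\right)-g\left(\sigma\right)}\right]d\sigma}\,ds},
\end{align*}
the H\"{o}lder condition gives $\left|\Psi_g\left({t;a,v}\right)\right|\le\frac{H}{\left({v-a}\right)\left({p+1}\right)\left({p+2}\right)}\left[{\left({v-a}\right)^{p+2}-\left({t-a}\right)^{p+2}-\left({v-t}\right)^{p+2}}\right]$; the bracket is concave in $t$ with its critical point genuinely at the midpoint, so the maximisation is now legitimate and yields
$\frac{1}{v-a}\sup_{t\in[a,v]}\left|\Psi_g\left({t;a,v}\right)\right|\le\frac{\left({2^{p+1}-1}\right)H\left({v-a}\right)^p}{2^{p+1}\left({p+1}\right)\left({p+2}\right)}\le\frac{H\left({v-a}\right)^p}{2^{p+1}\left({p+1}\right)}$,
the last step because $2^{p+1}-1\le p+2$ on $(0,1]$ (convexity of $2^{p+1}$, equality at $p=1$). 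Adding the analogous bound on $[u,b]$ recovers \eqref{eq4.5.14} --- indeed with the strictly better factor $\frac{2^{p+1}-1}{p+2}$ when $p<1$ --- and your passage from the sum to $\left[{\frac{(v-a)+(b-u)}{2}+\left|{\frac{v-a}{2}-\frac{b-u}{2}}\right|}\right]^p$ via the maximum gives \eqref{eq4.5.15} exactly as in the paper. In short: same route as the paper, the flaw you flagged is real (it sits in the published proof, not merely in your draft), and your proposed symmetric estimate is the fix that makes the stated constants correct.
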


\begin{proof}
	We repeat the proof of Theorem \ref{thm4.5.1}. So as $f$ is of
	bounded variation and $g$ is of $p$-$H$--H\"{o}lder type on
	$[a,b]$, then we have
	\begin{align*}
	&\left\vert {\mathcal{T}_a^{v} \left( {f,g} \right) -
		\mathcal{T}_{u}^b \left( {f,g} \right)}\right\vert
	\nonumber\\
	& \leq
	\frac{1}{{v - a}}\mathop {\sup }\limits_{r\in \left[ {a,v}\right] }\left\vert {\left( {%
			r-a}\right) \left[ {\frac{1}{{r-a}}\int_{a}^{r}{g\left( s\right) ds}-\frac{1%
			}{{v-a}}\int_{a}^{v}{g\left( s\right) ds}}\right] }\right\vert
	\bigvee_{a}^{v}\left( {f}\right)
	\nonumber\\
	&\qquad+ \frac{1}{{b - u}}\mathop {\sup }\limits_{t\in \left[ {u,b}\right] }\left\vert {\left( {%
			t-u}\right) \left[ {\frac{1}{{t-u}}\int_{u}^{t}{g\left( s\right) ds}-\frac{1%
			}{{b-u}}\int_{u}^{b}{g\left( s\right) ds}}\right] }\right\vert
	\bigvee_{u}^{b}\left( {f}\right)
	\nonumber\\
	& \leq \frac{1}{{v - a}}\frac{H}{p+1}\mathop {\sup }\limits_{t\in \left[ {a,v}\right] }{%
		\left( {r-a}\right) \left( {v-r}\right) ^{p}}\bigvee_{a}^{v}\left( {%
		f}\right)+\frac{1}{{b - u}}\frac{H}{p+1}\mathop {\sup }\limits_{t\in \left[ {u,b}\right] }{%
		\left( {t-u}\right) \left( {b-t}\right) ^{p}}\bigvee_{u}^{b}\left( {%
		f}\right)
	\nonumber\\
	& =  H\frac{{\left( {v - a} \right)^p }}{{2^{p + 1} \left( {p + 1}
			\right)}} \bigvee_{a}^{v}\left( {%
		f}\right)+H\frac{{\left( {b - u} \right)^p }}{{2^{p + 1} \left( {p
				+ 1}\right)}} \bigvee_{u}^{b}\left( {%
		f}\right)
	\\
	& \le   H\frac{{\left( {v - a} \right)^p  + \left( {b - u}
			\right)^p }}{{2^{p + 1} \left( {p + 1} \right)}}
	\bigvee_{a}^{b}\left( {%
		f}\right),\nonumber
	\end{align*}
	which proves the first inequality. To obtain the second inequality
	from the above inequality we may obtain that
	\begin{align*}
	&\left\vert {\mathcal{T}_a^{v} \left( {f,g} \right) -
		\mathcal{T}_{u}^b \left( {f,g} \right)}\right\vert
	\\
	&\le H\frac{{\left( {v - a} \right)^p }}{{2^{p + 1} \left( {p + 1}
			\right)}} \bigvee_{a}^{v}\left( {%
		f}\right)+H\frac{{\left( {b - u} \right)^p }}{{2^{p + 1} \left( {p
				+ 1}
			\right)}} \bigvee_{u}^{b}\left( {%
		f}\right)
	\\
	&\le H\frac{{1}}{{2^{p + 1} \left( {p + 1}
			\right)}} \max\left\{\left( {v - a} \right)^p ,\left( {b - u} \right)^p \right\}  \left[\bigvee_{a}^{v}\left( {%
		f}\right)+\bigvee_{u}^{b}\left( { f}\right) \right]
	\\
	&\le H\frac{{1}}{{2^{p} \left( {p + 1} \right)}} \left[
	{\frac{{\left( {v - a} \right) + \left( {b - u} \right)}}{2} +
		\left| {\frac{{v - a}}{2} - \frac{{b - u}}{2}} \right|} \right]^p
	\cdot\bigvee_{a}^{b}\left( { f}\right).
	\end{align*}
	which proves (\ref{eq4.5.15}), and thus the proof is completed.
\end{proof}

\begin{cor}
	Under the assumptions of Theorem \ref{thm4.5.3}, we have
	\begin{align}
	\left\vert {\mathcal{T}_a^{u} \left( {f,g} \right) -
		\mathcal{T}_{u}^b \left( {f,g} \right)}\right\vert
	\label{eq4.5.17}   \le  H\frac{{\left( {u - a} \right)^p  + \left(
			{b - u} \right)^p }}{{2^{p + 1} \left( {p + 1} \right)}}
	\bigvee_{a}^{b}\left( {%
		f}\right),
	\end{align}
	and
	\begin{align}
	\left\vert {\mathcal{T}_a^{u} \left( {f,g} \right) -
		\mathcal{T}_{u}^b \left( {f,g} \right)}\right\vert
	\label{eq4.5.18}    \le \frac{{H}}{{2^{p} \left( {p + 1} \right)}}
	\left[ {\frac{{b-a}}{2} + \left| {u-\frac{{a + b}}{2}} \right|}
	\right]^p \cdot\bigvee_{a}^{b}\left( { f}\right),
	\end{align}
	for all $a\le u \le b$. In particular case if $u=\frac{a+b}{2}$,
	then the both inequalities (\ref{eq4.5.17}) and (\ref{eq4.5.18})
	gives the same inequality, that is
	\begin{align}
	& \left\vert {\mathcal{T}_a^{\frac{a+b}{2}} \left( {f,g} \right) -
		\mathcal{T}_{\frac{a+b}{2}}^b \left( {f,g} \right)}\right\vert
	\label{eq4.5.19}\le  H\frac{{\left( {b - a} \right)^p  }}{{2^{2p}
			\left( {p + 1} \right)}} \bigvee_{a}^{b}\left( { f}\right).
	\end{align}
\end{cor}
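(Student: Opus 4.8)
The plan is to obtain both \eqref{eq4.5.17} and \eqref{eq4.5.18} as the limiting case $v \to u^+$ of Theorem~\ref{thm4.5.3}, exactly mirroring the limiting argument used in the Corollary that follows Theorem~\ref{thm4.5.1}. Concretely, I would fix $\epsilon > 0$, set $v = u + \epsilon$ in the two inequalities \eqref{eq4.5.14} and \eqref{eq4.5.15}, and then let $\epsilon \to 0^+$. The key point justifying the limit is that $v \mapsto \mathcal{T}_a^v(f,g)$ is continuous from the right at $v = u$: since $f$ is of bounded variation and $g$ is $p$-$H$--H\"{o}lder (hence continuous), all the Riemann--Stieltjes and Lebesgue integrals over $[a,v]$ that build the functional depend continuously on the endpoint $v$. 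Consequently the left-hand side of each inequality tends to $\left|\mathcal{T}_a^u(f,g) - \mathcal{T}_u^b(f,g)\right|$.

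Next I would check that the right-hand sides pass cleanly to the limit. In \eqref{eq4.5.14} the factor $(v-a)^p = (u+\epsilon-a)^p$ tends to $(u-a)^p$, which immediately yields \eqref{eq4.5.17}. In \eqref{eq4.5.15}, substituting $v = u+\epsilon$ and letting $\epsilon \to 0^+$ replaces $(v-a)$ by $(u-a)$; then, using $(u-a)+(b-u) = b-a$ together with the identity $\frac{u-a}{2} - \frac{b-u}{2} = u - \frac{a+b}{2}$, the bracket collapses to $\frac{b-a}{2} + \left|u - \frac{a+b}{2}\right|$, producing \eqref{eq4.5.18}.

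Finally, for the special case $u = \frac{a+b}{2}$ I would simply evaluate the two bounds directly. In \eqref{eq4.5.17} one has $u - a = b - u = \frac{b-a}{2}$, so $(u-a)^p + (b-u)^p = 2\left(\frac{b-a}{2}\right)^p = \frac{(b-a)^p}{2^{p-1}}$, and the bound simplifies to $H\frac{(b-a)^p}{2^{2p}(p+1)}$. In \eqref{eq4.5.18} the modulus term $\left|u - \frac{a+b}{2}\right|$ vanishes, the bracket becomes $\left(\frac{b-a}{2}\right)^p$, and the bound again equals $H\frac{(b-a)^p}{2^{2p}(p+1)}$; hence both reduce to \eqref{eq4.5.19}. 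I expect no genuine obstacle here: the whole argument is a right-limit together with elementary algebra, and the only step deserving a moment's care is the justification that $v \mapsto \mathcal{T}_a^v(f,g)$ is right-continuous, so that the limit may be taken inside the absolute value.
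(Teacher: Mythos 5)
Your proposal is correct and follows exactly the paper's own argument: the paper proves this corollary by setting $v=u+\epsilon$ in Theorem \ref{thm4.5.3} and letting $\epsilon\to 0^+$, which is precisely your limiting scheme, and your algebra for the bracket collapse and the case $u=\frac{a+b}{2}$ checks out. Your added justification of the right-continuity of $v\mapsto\mathcal{T}_a^v(f,g)$ is a detail the paper leaves implicit, so it strengthens rather than departs from the published proof.
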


\begin{proof}
	In Theorem \ref{thm4.5.3}, let $\epsilon>0$ and set $v=u+\epsilon$
	so as $\epsilon \to 0^+$ we get the required result.
\end{proof}

\begin{thm}
	\label{thm4.5.5} Let $f,g:[a,b]\rightarrow \mathbb{R}$ be such
	that
	$f$ is of bounded variation on $[a,b]$ and $g$ is monotonic nondecreasing on $%
	[a,b]$, then
	\begin{multline}
	\left\vert {\mathcal{T}_a^{v} \left( {f,g} \right) -
		\mathcal{T}_{u}^b \left( {f,g} \right)}\right\vert
	\label{eq4.5.20}
	\\
	\le\frac{1}{4} \left\{ {\frac{{\left[ {g\left( v \right) - g\left(
					a \right)} \right] + \left[ {g\left( b \right) - g\left( u
					\right)} \right]}}{2} + \left| {  \frac{{g\left( v \right) +
					g\left( u \right)}}{2} - \frac{{g\left( a \right) + g\left( b
					\right)}}{2} } \right|} \right\}\cdot \bigvee_a^b \left( f
	\right),
	\end{multline}
	for all $a\le u<v\le b$.
\end{thm}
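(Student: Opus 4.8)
The strategy is to rerun the argument of Theorems~\ref{thm4.5.1} and~\ref{thm4.5.3} almost verbatim, replacing the $L_\infty$/Hölder estimate for a difference of two integral means of $g$ by the monotonic-nondecreasing branch, i.e. the third case of \eqref{Cer4.3.4}. First I would combine the Cerone--Dragomir representation \eqref{eq4.5.3} with the bounded-variation estimate \eqref{eq4.3.17} (legitimate since each relevant $\Psi_g$ is an indefinite integral of $g$, hence continuous, while $f$ is of bounded variation). Exactly as in \eqref{eq4.5.4}, this bounds $\left\vert\mathcal{T}_a^v\left(f,g\right)-\mathcal{T}_u^b\left(f,g\right)\right\vert$ by a sum of two terms, the first being $\frac{1}{v-a}\bigvee_a^v\left(f\right)$ times $\sup_{r\in[a,v]}(r-a)\left\vert\frac{1}{r-a}\int_a^r g\left(s\right)ds-\frac{1}{v-a}\int_a^v g\left(s\right)ds\right\vert$, and the second the analogous quantity on $[u,b]$. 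Everything is thereby reduced to estimating a difference of two integral means of the monotone function $g$.

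Next I would feed each mean difference into the monotone branch of \eqref{Cer4.3.4}. For the first term the ``large'' interval is $[a,v]$ and the ``small'' one is $[a,r]$, so the substitution $a\to a,\ b\to v,\ c\to a,\ d\to r$ makes the intermediate point $s_0=\frac{cb-ad}{(b-a)-(d-c)}$ collapse to the left endpoint $a$, and the estimate simplifies to $\frac{v-r}{v-a}\left[g\left(v\right)-g\left(a\right)\right]$; for the second term ($[u,b]$ large, $[u,t]$ small) the corresponding $s_0$ collapses to $u$, giving $\frac{b-t}{b-u}\left[g\left(b\right)-g\left(u\right)\right]$. Using $\sup_{r\in[a,v]}(r-a)(v-r)=\tfrac14(v-a)^2$ (attained at the midpoint, as already recorded in the proof of Theorem~\ref{thm4.5.1}) the two suprema evaluate exactly, and I arrive at
\begin{align*}
\left\vert\mathcal{T}_a^v\left(f,g\right)-\mathcal{T}_u^b\left(f,g\right)\right\vert\le \frac{1}{4}\left[g\left(v\right)-g\left(a\right)\right]\bigvee_a^v\left(f\right)+\frac{1}{4}\left[g\left(b\right)-g\left(u\right)\right]\bigvee_u^b\left(f\right).
\end{align*}

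Finally, I would pass to the stated form by the same device used at the end of Theorem~\ref{thm4.5.1}: bound $\bigvee_a^v\left(f\right)$ and $\bigvee_u^b\left(f\right)$ by $\bigvee_a^b\left(f\right)$ and collect the two coefficients into a maximum via $\max\{A,B\}=\frac{A+B}{2}+\frac{|A-B|}{2}$ with $A=g\left(v\right)-g\left(a\right)$ and $B=g\left(b\right)-g\left(u\right)$. Since $A-B=\left[g\left(v\right)+g\left(u\right)\right]-\left[g\left(a\right)+g\left(b\right)\right]$, this reproduces exactly the bracketed expression in \eqref{eq4.5.20}. I expect the one genuinely delicate step to be the application of \eqref{Cer4.3.4}: one must correctly identify which interval plays the role of the ``large'' one in each mean difference and verify the collapse $s_0=a$ (respectively $s_0=u$), since a misassignment would yield the wrong monotone bound; the supremum evaluation and the concluding rearrangement into the maximum then follow the pattern of Theorem~\ref{thm4.5.1} without change.
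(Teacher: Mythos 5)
Your proposal reproduces the paper's own proof essentially step for step: the same reduction via the representation \eqref{eq4.5.3} and the bounded-variation estimate \eqref{eq4.3.17} leading to \eqref{eq4.5.4}, the same appeal to the monotone branch of \eqref{Cer4.3.4} (your verification that $s_0$ collapses to $a$, resp.\ $u$, correctly supplies the detail behind \eqref{eq4.5.22} and \eqref{eq4.5.23}), and the same evaluation $\sup_{r\in[a,v]}(r-a)(v-r)=\tfrac{1}{4}(v-a)^2$ followed by the rearrangement $\max\{A,B\}=\tfrac{A+B}{2}+\tfrac{|A-B|}{2}$. The one caveat — that passing from $\tfrac{1}{4}A\bigvee_a^v(f)+\tfrac{1}{4}B\bigvee_u^b(f)$ to $\tfrac{1}{4}\max\{A,B\}\bigvee_a^b(f)$ is not actually justified when both coefficients are positive (bounding each variation by $\bigvee_a^b(f)$ only yields $\tfrac{1}{4}(A+B)\bigvee_a^b(f)$) — is inherited verbatim from the paper, where this step is even written as an equality, so it is a shared feature rather than a divergence of your argument from the source.
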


\begin{proof}
	As $f$  is of bounded variation on $[a,b]$ and $g$ is monotonic
	nondecreasing on $[a,b]$ (which implies that $\Psi_g\left( {t;a,b}
	\right)$ is absolutely continuous on $[a,b]$), by (\ref{eq4.5.4})
	we have
	\begin{align}
	& \left\vert {\mathcal{T}_a^{v} \left( {f,g} \right) -
		\mathcal{T}_{u}^b \left( {f,g} \right)}\right\vert  \nonumber \\
	& \leq \frac{1}{{v - a}} \mathop {\sup }\limits_{r \in \left[
		{a,v} \right]}
	\left[ {\left( {r-a}\right) \left| {\frac{1}{{r-a}}%
			\int_{a}^{r}{g\left( s\right)
				ds}-\frac{1}{{v-a}}\int_{a}^{b}{g\left( s\right) ds}}\right|
	}\right]  \bigvee_{a}^{v}\left( f\right) \nonumber
	\\
	&\qquad+ \frac{1}{{b - u}} \mathop {\sup }\limits_{r \in \left[
		{u,b} \right]} \left[ {\left( {t-u}\right) \left| {\frac{1}{{t-u}}%
			\int_{u}^{t}{g\left( s\right)
				ds}-\frac{1}{{b-u}}\int_{u}^{b}{g\left( s\right) ds}}\right|
	}\right]  \bigvee_{u}^{b}\left( f\right).\label{eq4.5.21}
	\end{align}%
	Employing the third part of (\ref{Cer4.3.4}), setting $d=r,t$ and
	$c=a,u$, respectively  we get
	\begin{align}
	\left\vert {\frac{1}{{r-a}}\int_{a}^{r}{g\left( s\right)
			ds}-\frac{1}{{v-a}} \int_{a}^{v}{g\left( s\right) ds}}\right\vert
	\leq \frac{{v-r}}{{v-a}}\left[ {g\left( v\right) -g\left( a\right)
	}\right] .  \label{eq4.5.22}
	\end{align}
	and
	\begin{align}
	\left\vert {\frac{1}{{t-u}}\int_{u}^{t}{g\left( s\right) ds}-\frac{1}{{b-u}}%
		\int_{u}^{b}{g\left( s\right) ds}}\right\vert \leq
	\frac{{b-t}}{{b-u}}\left[ {g\left( b\right) -g\left( u\right)
	}\right] .  \label{eq4.5.23}
	\end{align}%
	Substituting (\ref{eq4.5.22}) and (\ref{eq4.5.23}) in
	(\ref{eq4.5.21}), we get
	\begin{align*}
	&\left\vert {\mathcal{T}_a^{v} \left( {f,g} \right) -
		\mathcal{T}_{u}^b \left( {f,g} \right)}\right\vert
	\\
	& \leq \frac{1}{{\left( {v - a} \right)^2 }}\mathop {\sup
	}\limits_{r \in \left[ {a,v} \right]} \left\{ {\left( {r - a}
		\right)\left( {v - r} \right)} \right\} \cdot \left[ {g\left( v
		\right) - g\left( a \right)} \right]\bigvee_a^v \left( f \right)
	\\
	&\qquad+   \frac{1}{{\left( {b - u} \right)^2 }}\mathop {\sup
	}\limits_{t \in \left[ {u,b} \right]} \left\{ {\left( {t - u}
		\right)\left( {b - t} \right)} \right\} \cdot \left[ {g\left( b
		\right) - g\left( u \right)} \right]\bigvee_u^b \left( f \right)
	\\
	&=\frac{1}{4}\left[ {g\left( v \right) - g\left( a \right)}
	\right]\bigvee_a^v \left( f \right)+ \frac{1}{4}\left[ {g\left( b
		\right) - g\left( u \right)} \right]\bigvee_u^b \left( f \right)
	\\
	&= \frac{1}{4}\max\{{g\left( v \right) - g\left( a \right) ,
		g\left( b \right) - g\left( u \right)}\}\cdot \bigvee_a^b \left( f
	\right)
	\\
	&\le \frac{1}{4} \left\{ {\frac{{\left[ {g\left( v \right) -
					g\left( a \right)} \right] + \left[ {g\left( b \right) - g\left( u
					\right)} \right]}}{2} + \left| {  \frac{{g\left( v \right) +
					g\left( u \right)}}{2} - \frac{{g\left( a \right) + g\left( b
					\right)}}{2} } \right|} \right\}\cdot \bigvee_a^b \left( f
	\right),
	\end{align*}
	and thus the proof is finished.
\end{proof}
\begin{cor}
	Under the assumptions of Theorem \ref{thm4.5.5}, we have
	\begin{align}
	\left\vert {\mathcal{T}_a^{u} \left( {f,g} \right) -
		\mathcal{T}_{u}^b \left( {f,g} \right)}\right\vert
	\label{eq4.5.24} \le  \left\{ {\frac{{g\left( b \right) - g\left(
				a \right)}}{2} + \left| {g\left( u \right) - \frac{{g\left( a
					\right) + g\left( b \right)}}{2}} \right|} \right\}\cdot
	\bigvee_a^b \left( f \right),
	\end{align}
	for all $a\le u\le b$. In particular case if $u=\frac{a+b}{2}$,
	then the both inequalities (\ref{eq4.5.24}) gives the same
	inequality, that is
	\begin{multline}
	\left\vert {\mathcal{T}_a^{\frac{a+b}{2}} \left( {f,g} \right) -
		\mathcal{T}_{\frac{a+b}{2}}^b \left( {f,g} \right)}\right\vert
	\label{eq4.5.25}
	\\
	\le  \left\{ {\frac{{g\left( b \right) - g\left( a \right)}}{2} +
		\left| {g\left( \frac{a+b}{2} \right) - \frac{{g\left( a \right) +
					g\left( b \right)}}{2}} \right|} \right\}\cdot \bigvee_a^b \left(
	f \right),
	\end{multline}
\end{cor}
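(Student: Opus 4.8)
The plan is to derive the Corollary as the limiting case $v \to u^+$ of Theorem \ref{thm4.5.5}, in the same spirit as the corollaries following Theorems \ref{thm4.5.1} and \ref{thm4.5.3}. Fixing $u \in [a,b)$ and choosing $\epsilon > 0$ small enough that $u + \epsilon \le b$, I would apply \eqref{eq4.5.20} with $v$ replaced by $u + \epsilon$, which is admissible since $a \le u < u + \epsilon \le b$. This gives
\begin{align*}
& \left\vert \mathcal{T}_a^{u+\epsilon}\left( f,g\right) - \mathcal{T}_u^b\left( f,g\right)\right\vert \\
& \le \frac{1}{4}\left\{ \frac{\left[ g\left( u+\epsilon\right) - g\left( a\right)\right] + \left[ g\left( b\right) - g\left( u\right)\right]}{2} + \left\vert \frac{g\left( u+\epsilon\right) + g\left( u\right)}{2} - \frac{g\left( a\right) + g\left( b\right)}{2}\right\vert\right\}\bigvee_a^b\left( f\right).
\end{align*}

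Next I would let $\epsilon \to 0^+$. On the left-hand side the integrals defining the functional in \eqref{identity} depend continuously on the upper endpoint (for integrable $f,g$), so $\mathcal{T}_a^{u+\epsilon}(f,g) \to \mathcal{T}_a^u(f,g)$. On the right-hand side, letting $g(u+\epsilon) \to g(u)$ and using the telescoping identity $\left[ g(u) - g(a)\right] + \left[ g(b) - g(u)\right] = g(b) - g(a)$, the first term collapses to $\tfrac12\left[ g(b) - g(a)\right]$ and the modulus term collapses to $\bigl\vert g(u) - \tfrac12\left[ g(a) + g(b)\right]\bigr\vert$. This reproduces \eqref{eq4.5.24} (the constant $\tfrac14$ being carried over from \eqref{eq4.5.20}). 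Finally, inequality \eqref{eq4.5.25} is the special case $u = \tfrac{a+b}{2}$ of \eqref{eq4.5.24}, obtained by direct substitution.

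The one delicate point is the passage $g(u+\epsilon) \to g(u)$: since $g$ is only assumed monotonic nondecreasing, the right-hand limit is $g(u^+)$, which may strictly exceed $g(u)$ at a jump, so the naive limit would yield the bound with $g(u^+)$ rather than the sharper value $g(u)$ appearing in \eqref{eq4.5.24}. I expect to resolve this by bypassing the limit altogether: repeating the proof of Theorem \ref{thm4.5.5} verbatim with the single point $v = u$ is legitimate for the functionals $\mathcal{T}_a^u(f,g)$ and $\mathcal{T}_u^b(f,g)$ and produces exactly $\tfrac14\left[ g(u) - g(a)\right]\bigvee_a^u(f) + \tfrac14\left[ g(b) - g(u)\right]\bigvee_u^b(f)$, after which the elementary identity $\max\{X,Y\} = \tfrac12(X+Y) + \tfrac12\vert X - Y\vert$, applied with $X = g(u) - g(a)$ and $Y = g(b) - g(u)$ together with the additivity $\bigvee_a^u(f) + \bigvee_u^b(f) = \bigvee_a^b(f)$, gives \eqref{eq4.5.24} directly. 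The remaining algebra is routine.
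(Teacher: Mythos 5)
Your proposal is correct, and it is in fact more careful than the paper's own proof, which consists of the single sentence ``In Theorem \ref{thm4.5.3} [evidently a misprint for Theorem \ref{thm4.5.5}], let $\epsilon>0$ and set $v=u+\epsilon$ so as $\epsilon \to 0^+$ we get the required result'' --- i.e.\ exactly the naive limit you begin with. The delicate point you isolate is real: since $g$ is only nondecreasing, letting $\epsilon\to 0^+$ in \eqref{eq4.5.20} produces $g(u^+)$ rather than $g(u)$, so at a jump of $g$ the limit alone does not yield \eqref{eq4.5.24} with the value $g(u)$; it happens to suffice here only because the printed corollary has dropped the factor $\frac{1}{4}$ of \eqref{eq4.5.20}, leaving enough slack to absorb $g(u^+)\le g(b)$. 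Your repair --- rerunning the proof of Theorem \ref{thm4.5.5} directly with the split at $u$, obtaining $\left|\mathcal{T}_a^u(f,g)\right|\le\frac{1}{4}\left[g(u)-g(a)\right]\bigvee_a^u(f)$ and $\left|\mathcal{T}_u^b(f,g)\right|\le\frac{1}{4}\left[g(b)-g(u)\right]\bigvee_u^b(f)$, then applying $\max\{X,Y\}=\frac{1}{2}(X+Y)+\frac{1}{2}\left|X-Y\right|$ together with $\bigvee_a^u(f)+\bigvee_u^b(f)=\bigvee_a^b(f)$ --- is sound and bypasses the limit entirely; note that the additivity of the variation at the common point $u$ is precisely what makes the max-step legitimate here, whereas for $u<v$ one only has $\bigvee_a^v(f)+\bigvee_u^b(f)\ge\bigvee_a^b(f)$, so the corresponding step in the proof of Theorem \ref{thm4.5.5} is actually questionable there. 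The one discrepancy is your parenthetical claim that the constant $\frac{1}{4}$ is ``carried over'' into \eqref{eq4.5.24}: as printed, \eqref{eq4.5.24} carries no $\frac{1}{4}$, so what your direct argument proves is the \emph{stronger} inequality with that extra factor, and the stated corollary follows a fortiori; the specialization $u=\frac{a+b}{2}$ giving \eqref{eq4.5.25} is, as you say, immediate substitution.
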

\begin{proof}
	In Theorem \ref{thm4.5.3}, let $\epsilon>0$ and set $v=u+\epsilon$
	so as $\epsilon \to 0^+$ we get the required result.
\end{proof}

\section{Bounds for Lipschitzian integrators}

\begin{thm}
	\label{thm4.5.7} Let $f,g:[a,b]\rightarrow \mathbb{R}$ be such
	that $f$
	is $L$--Lipschitzian on $[a,b]$ and $g$ is an absolutely continuous on $%
	[a,b] $, then
	\begin{align}
	&\left\vert {\mathcal{T}_a^{v} \left( {f,g} \right) -
		\mathcal{T}_{u}^b \left( {f,g} \right)}\right\vert  \label{eq4.4.26} \\
	& \leq L\left\{
	\begin{array}{l}
	\frac{{\left[ {\left( {b - a} \right) - \left( {v - u} \right)}
			\right]}}{6} \cdot \left[ {\frac{1}{4} + \left( {\frac{{\frac{{a +
							b}}{2} - \frac{{u + v}}{2}}}{{\left( {b - a} \right) - \left( {v -
						u} \right)}}} \right)^2 } \right]
	\left\Vert {g^{\prime }}\right\Vert _{\infty } ,\,\,\,\,\,\,\,\,\,\,\,\,\,\,\,\,g^{\prime }\in L_{\infty }\left[ {%
		a,b}\right] ; \\
	\\
	\frac{{2\left[ {\left( {b - a} \right) - \left( {v - u} \right)}
			\right]}}{{\left( {q+1}\right) ^{1/q}}}   \cdot \left[
	{\frac{1}{4} + \left( {\frac{{\frac{{a + b}}{2} - \frac{{u +
							v}}{2}}}{{\left( {b - a} \right) - \left( {v - u} \right)}}}
		\right)^2 } \right]
	{\rm{B}}\left({2,1+\frac{1}{q}}\right)\cdot\left\Vert {g^{\prime
	}}\right\Vert _{p,[a,b]} ,\,\,\,\,\,\,\,\,\,g^{\prime }\in
	L_{p}\left[ {a,b}\right],
	\end{array}%
	\right.  \nonumber
	\end{align}%
	where, $p>1$ and $\frac{1}{p}+\frac{1}{q}=1$.
\end{thm}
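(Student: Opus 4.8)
The plan is to mirror the proofs of Theorems \ref{thm4.5.1}, \ref{thm4.5.3} and \ref{thm4.5.5}, replacing the bounded-variation integrator estimate \eqref{eq4.3.17} by its Lipschitzian counterpart. Since $f$ is $L$-Lipschitzian it is absolutely continuous with $df(t)=f'(t)\,dt$ and $|f'|\le L$ a.e., so for every continuous kernel $w$ one has $\left|\int_\alpha^\beta w(t)\,df(t)\right|\le L\int_\alpha^\beta |w(t)|\,dt$. Feeding this into the Cerone--Dragomir representation \eqref{eq4.5.3}, exactly as \eqref{eq4.5.4} was obtained, I would first reduce the problem to the two-integral estimate
\[
\left|\mathcal{T}_a^v(f,g)-\mathcal{T}_u^b(f,g)\right|
\le \frac{L}{v-a}\int_a^v \left|\Psi_g(t;a,v)\right|dt
+\frac{L}{b-u}\int_u^b \left|\Psi_g(r;u,b)\right|dr ,
\]
where $\Psi_g(t;\alpha,\beta)=\int_\alpha^t g-\tfrac{t-\alpha}{\beta-\alpha}\int_\alpha^\beta g$, the key difference from the earlier theorems being that the variation–supremum pairing is now replaced by the $L$-times-integral pairing.

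Next, I would write $\Psi_g(t;a,v)=(t-a)\big[\tfrac{1}{t-a}\int_a^t g-\tfrac{1}{v-a}\int_a^v g\big]$, i.e. $(t-a)$ times a difference of two integral means of $g$ on $[a,v]$, and likewise for the second term on $[u,b]$. The $g'\in L_\infty$ case is then handled by inserting the Barnett-type bound \eqref{Bar4.3.1}, and the $g'\in L_p$ case by the first branch of \eqref{Cer4.3.2}; in both instances the bracketed difference of means is controlled by $\|g'\|$ times an explicit function of $t$. The remaining one-dimensional integrations are evaluated by the affine substitutions $t=a+x(v-a)$ and $r=u+x(b-u)$ with $x\in[0,1]$: in the $L_\infty$ case this yields $\int_0^1 x(1-x)\,dx=\tfrac16=\mathrm{B}(2,2)$ (explaining the $6$ in the denominator of \eqref{eq4.4.26}), and in the $L_p$ case it yields $\int_0^1 x(1-x)^{1/q}\,dx=\mathrm{B}(2,1+\tfrac1q)$ together with the factor $(q+1)^{-1/q}$ inherited from \eqref{Cer4.3.2}, which is exactly where the Beta-function constant comes from.

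Finally I would assemble the two interval contributions. After bounding the localized norms $\|g'\|_{p,[a,v]}$ and $\|g'\|_{p,[u,b]}$ by the global $\|g'\|_{p,[a,b]}$, the two terms are merged via the elementary identity $\max\{X,Y\}=\tfrac{X+Y}{2}+\big|\tfrac{X-Y}{2}\big|$ applied with $X=v-a$ and $Y=b-u$, which is what recasts the estimate into the single coefficient $\big[\tfrac14+\big(\tfrac{\frac{a+b}{2}-\frac{u+v}{2}}{(b-a)-(v-u)}\big)^2\big]\big[(b-a)-(v-u)\big]$. The hard part will be this merging: one must check carefully that the geometric factors produced on the two subintervals reassemble into the Barnett coefficient attached to the pair $([a,b],[u,v])$ rather than into a pure sum $(v-a)^2+(b-u)^2$, and that globalizing the norms does not spoil the constants. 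Testing $f=g=\mathrm{id}$ is the natural way to track the sharp constants through this step and to confirm the reported bound.
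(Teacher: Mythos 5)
Your proposal follows the paper's proof essentially verbatim up to the last step: the paper likewise starts from the Lipschitz estimate \eqref{eq4.5.27}, reduces to the two-integral bound \eqref{eq4.5.28} with the kernels $(r-a)$ times a difference of integral means, inserts \eqref{Bar4.3.1} (giving the factor $\tfrac{1}{2}(v-r)\left\Vert g'\right\Vert_{\infty}$) in the $L_{\infty}$ case and the first branch of \eqref{Cer4.3.2} in the $L_{p}$ case (after pulling out $\sup_{r\in[a,v]}\left[(r-a)^{q}+(v-r)^{q}\right]^{1/q}=v-a$, a decoupling step you did not mention but which is needed before your Beta integral applies), and then evaluates exactly the integrals you name: $\int_{0}^{1}x(1-x)\,dx=\tfrac{1}{6}$ and $\int_{0}^{1}x(1-x)^{1/q}\,dx={\rm B}\left(2,1+\tfrac{1}{q}\right)$. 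Up to this point your route and the paper's coincide, and both arrive at the intermediate bounds $\tfrac{L}{12}\left[(v-a)^{2}+(b-u)^{2}\right]\left\Vert g'\right\Vert_{\infty}$ and $\tfrac{L}{(q+1)^{1/q}}{\rm B}\left(2,1+\tfrac{1}{q}\right)\left[(v-a)^{2}+(b-u)^{2}\right]\left\Vert g'\right\Vert_{p,[a,b]}$.

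The genuine gap is the merging step, and your own instinct that it is ``the hard part'' is correct — but the tool you chose cannot perform it. The identity $\max\{X,Y\}=\tfrac{X+Y}{2}+\left|\tfrac{X-Y}{2}\right|$ produces coefficients of the shape $\left[\tfrac{X+Y}{2}+\left|\tfrac{X-Y}{2}\right|\right]^{2}$ (this is what appears in Theorems \ref{thm4.5.3} and \ref{thm4.5.9}), never the Barnett shape $\left[\tfrac{1}{4}+(\cdot)^{2}\right]\cdot(\cdot)$; moreover it would introduce an extra inequality where none is needed. What the paper's proof actually uses is the exact algebraic identity
\[
\frac{X^{2}+Y^{2}}{2}=\left[\frac{1}{4}+\left(\frac{X-Y}{2\left(X+Y\right)}\right)^{2}\right]\left(X+Y\right)^{2},
\]
applied with $X=v-a$, $Y=b-u$, so no maximum and no loss of constants occurs — your worry that ``globalizing the norms spoils the constants'' dissolves, since only the localized norms are enlarged to $[a,b]$ and the geometric recast is an identity. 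Note, however, that for $a\le u<v\le b$ one has $X+Y=(b-a)+(v-u)$ and $X-Y=(u+v)-(a+b)$, so the identity yields the factor $\left[(b-a)+(v-u)\right]^{2}$, squared and with a plus sign; the printed coefficient $\left[(b-a)-(v-u)\right]$ in \eqref{eq4.4.26} (first power, minus sign) is dimensionally inconsistent with the intermediate bound and is evidently a misprint carried over from \eqref{Bar4.3.1}, where the two intervals are nested rather than overlapping as here. To repair your argument, delete the max-decomposition, stop at the sum $(v-a)^{2}+(b-u)^{2}$, and apply the displayed identity.
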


\begin{proof}
	Using the fact that for a Riemann integrable function
	$p:[c,d]\rightarrow
	\mathbb{R}$ and $L$-Lipschitzian function $\nu :[c,d]\rightarrow \mathbb{R}$%
	, one has the inequality
	\begin{align}
	\left\vert {\int_{c}^{d}{p\left( t\right) d\nu \left( t\right)
	}}\right\vert \leq L\int_{c}^{d}{\left\vert {p\left( t\right)
		}\right\vert dt}. \label{eq4.5.27}
	\end{align}%
	As $f$ is $L$--Lipschitzian on $[a,b]$, by (\ref{eq4.5.27}) we
	have
	\begin{align}
	&\left\vert {\mathcal{T}_a^{v} \left( {f,g} \right) -
		\mathcal{T}_{u}^b \left( {f,g} \right)}\right\vert
	\nonumber\\
	& \leq
	\frac{L}{v-a}\int_{a}^{v}{\left\vert {\left( {r-a}\right) \left[ {\frac{1}{{r-a}}%
				\int_{a}^{r}{g\left( s\right)
					ds}-\frac{1}{{v-a}}\int_{a}^{v}{g\left( s\right) ds}}\right]
		}\right\vert ds}
	\label{eq4.5.28}\\
	&\qquad +\frac{L}{b-u}\int_{u}^{b}{\left\vert {\left( {t-u}\right)
			\left[ {\frac{1}{{t-u}} \int_{u}^{t}{g\left( s\right)
					ds}-\frac{1}{{b-u}}\int_{u}^{b}{g\left(s\right) ds}}\right]
		}\right\vert dt}
	\nonumber\\
	& \leq \frac{1}{2}L\left\Vert {g^{\prime }}\right\Vert _{\infty } \left[{\frac{1}{v-a}\int_{a}^{v}%
		{\left( {r-a}\right) \left( {v-r}\right) dr}+\frac{1}{b-u}\int_{u}^{b}%
		{\left( {t-a}\right) \left( {b-t}\right) dt}}\right]
	\nonumber\\
	&= \frac{1}{6}L\left\Vert {g^{\prime }}\right\Vert _{\infty }
	\left[{  \frac{{\left( {v - a} \right)^2 +\left( {b - u}
				\right)^2}}{2} }\right]\nonumber
	\\
	&=L\frac{{\left[ {\left( {b - a} \right) - \left( {v - u} \right)}
			\right]}}{6} \cdot \left[ {\frac{1}{4} + \left( {\frac{{\frac{{a +
							b}}{2} - \frac{{u + v}}{2}}}{{\left( {b - a} \right) - \left( {v -
						u} \right)}}} \right)^2 } \right] \left\Vert {g^{\prime
	}}\right\Vert _{\infty },\nonumber
	\end{align}%
	where we used the inequality (\ref{Bar4.3.1}), with $%
	d=r,t$ and $c=a,u$;  respectively.
	
	To obtain the second inequality, setting $d=r,t$ and $c=a,u$;
	respectively, in (\ref{Cer4.3.2}),  we get
	\begin{multline}
	\left\vert {\frac{1}{{r-a}}\int_{a}^{r}{g\left( s\right) ds}-\frac{1}{{v-a}%
		}\int_{a}^{v}{g\left( s\right) ds}}\right\vert  \label{eq4.5.29} \\
	\leq \frac{\left( {v-r}\right)^{\frac{1}{q}}}{{\left( {q+1}%
			\right) ^{1/q}\left( {v-a}\right) ^{\frac{1}{q}}}}\left[ {\left(
		{r - a} \right)^q  + \left( {v - r} \right)^q } \right]^{1/q}
	\left\Vert {%
		g^{\prime }}\right\Vert _{p,[a,v]}
	\end{multline}
	and
	\begin{multline}
	\left\vert {\frac{1}{{t-u}}\int_{u}^{t}{g\left( s\right) ds}-\frac{1}{{b-u}%
		}\int_{u}^{b}{g\left( s\right) ds}}\right\vert  \label{eq4.5.30}
	\\
	\leq \frac{\left( {b-t}\right)^{\frac{1}{q}}}{{\left( {q+1}%
			\right) ^{1/q}\left( {b-u}\right) ^{\frac{1}{q}}}}\left[ {\left(
		{t - u} \right)^q  + \left( {b - t} \right)^q } \right]^{1/q}
	\left\Vert {%
		g^{\prime }}\right\Vert _{p,[u,b]}
	\end{multline}
	Substituting (\ref{eq4.5.29}) and (\ref{eq4.5.30}) in
	(\ref{eq4.5.28}), we get
	\begin{align*}
	&\left\vert {\mathcal{T}_a^{v} \left( {f,g} \right) -
		\mathcal{T}_{u}^b \left( {f,g} \right)}\right\vert
	\\
	&\le L\frac{{\left\Vert {g^{\prime }}\right\Vert
			_{p,[a,v]}}}{{\left( {q+1}\right)^{1/q} \left( {v - a} \right)^{1
				+ \frac{1}{q}}  }}\int_{a}^{v}{\left( {r-a}\right) \left(
		{b-r}\right)^{\frac{1}{q}} \left[ {\left( {r - a} \right)^q +
			\left( {v - r} \right)^q } \right]^{1/q} dr}
	\\
	&\qquad+L\frac{{\left\Vert {g^{\prime }}\right\Vert
			_{p,[u,b]}}}{{\left( {q+1}\right)^{1/q}\left( {b - u} \right)^{1 +
				\frac{1}{q}} }}\int_{u}^{b}{\left( {t-u}\right) \left(
		{b-t}\right)^{\frac{1}{q}} \left[ {\left( {t - u} \right)^q  +
			\left( {b - t} \right)^q } \right]^{1/q} dt}
	\\
	& \leq L\frac{{\left\Vert {g^{\prime }}\right\Vert
			_{p,[a,v]}}}{{\left( {q+1}\right) ^{1/q}\left( {v - a}
			\right)^{1 + \frac{1}{q}} }}\mathop {\sup }\limits_{r\in \left[ {a,v}%
		\right] }  \left[ {\left( {r - a} \right)^q  + \left( {v - r}
		\right)^q } \right]^{1/q} \int_{a}^{v}{\left( {r-a}\right) \left(
		{v-r}\right) ^{\frac{1}{q}}dr}
	\\
	&\qquad+L\frac{{\left\Vert {g^{\prime }}\right\Vert
			_{p,[u,b]}}}{{\left( {q+1}\right) ^{1/q}\left( {b - u}
			\right)^{1 + \frac{1}{q}} }}\mathop {\sup }\limits_{t\in \left[ {u,b}%
		\right] }  \left[ {\left( {t - u} \right)^q  + \left( {b - t}
		\right)^q } \right]^{1/q} \int_{u}^{b}{\left( {t-u}\right) \left(
		{b-t}\right) ^{\frac{1}{q}}dt}
\\
	&=L\frac{{\left( {v-a}\right) ^{2}}}{{\left( {q+1}\right)
			^{1/q}}}{\rm{B}}\left({2,1+\frac{1}{q}}\right)\cdot \left\Vert
	{g^{\prime }}\right\Vert _{p,[a,v]} +L\frac{{\left( {b-u}\right)
			^{2}}}{{\left( {q+1}\right)
			^{1/q}}}{\rm{B}}\left({2,1+\frac{1}{q}}\right)\cdot \left\Vert
	{g^{\prime }}\right\Vert _{p,[u,b]}
	\\
	&\le L\frac{{\left\Vert {g^{\prime }}\right\Vert
			_{p,[a,b]}}}{{\left( {q+1}\right)
			^{1/q}}}{\rm{B}}\left({2,1+\frac{1}{q}}\right)\cdot
	\left[{\left( {v-a}\right) ^{2}
		+\left( {b-u}\right) ^{2} }\right]
	\\
	&=\frac{{2\left[ {\left( {b - a} \right) - \left( {v - u} \right)}
			\right]}}{{\left( {q+1}\right) ^{1/q}}}   \cdot \left[
	{\frac{1}{4} + \left( {\frac{{\frac{{a + b}}{2} - \frac{{u +
							v}}{2}}}{{\left( {b - a} \right) - \left( {v - u} \right)}}}
		\right)^2 } \right]
	{\rm{B}}\left({2,1+\frac{1}{q}}\right)\cdot\left\Vert {g^{\prime
	}}\right\Vert _{p,[a,b]}
	\end{align*}%
	
	which proves the second   inequality in (\ref{eq4.4.26}).
\end{proof}

\begin{cor}
	Under the assumptions of  Theorem \ref{thm4.5.7}, then
	\begin{align}
	&\left\vert {\mathcal{T}_a^{u} \left( {f,g} \right) -
		\mathcal{T}_{u}^b \left( {f,g} \right)}\right\vert  \label{eq4.5.31} \\
	& \leq L\left\{
	\begin{array}{l}
	\frac{1}{6}\left\Vert {g^{\prime }}\right\Vert _{\infty } \left[{
		\frac{{\left( {u - a} \right)^2 +\left( {b - u}
				\right)^2}}{2} }\right],\,\,\,\,\,\,\,\,\,\,\,\,\,\,\,\,g^{\prime }\in L_{\infty }\left[ {%
		a,b}\right] ; \\
	\\
	\frac{{ \left[{\left( {u-a}\right) ^{2} +\left( {b-u}\right) ^{2}
			}\right]}}{{\left( {q+1}\right)
			^{1/q}}}{\rm{B}}\left({2,1+\frac{1}{q}}\right)\cdot\left\Vert
	{g^{\prime }}\right\Vert _{p,[a,b]} ,\,\,\,\,\,\,\,\,\,g^{\prime
	}\in L_{p}\left[ {a,b}\right],
	\end{array}%
	\right.  \nonumber
	\end{align}%
	where, $p>1$ and $\frac{1}{p}+\frac{1}{q}=1$. In particular case,
	if $u=\frac{a+b}{2}$ then
	\begin{align}
	&\left\vert {\mathcal{T}_a^{\frac{a+b}{2}} \left( {f,g} \right) -
		\mathcal{T}_{\frac{a+b}{2}}^b \left( {f,g} \right)}\right\vert  \label{eq4.5.32} \\
	& \leq L\left\{
	\begin{array}{l}
	\frac{\left({b-a}\right)^2}{24}\left\Vert {g^{\prime }}\right\Vert _{\infty } ,\,\,\,\,\,\,\,\,\,\,\,\,\,\,\,\,g^{\prime }\in L_{\infty }\left[ {%
		a,b}\right] ; \\
	\\
	\frac{{  \left( {b-a}\right) ^{2}  }}{{2\left( {q+1}\right)
			^{1/q}}}{\rm{B}}\left({2,1+\frac{1}{q}}\right)\cdot\left\Vert
	{g^{\prime }}\right\Vert _{p,[a,b]} ,\,\,\,\,\,\,\,\,\,g^{\prime
	}\in L_{p}\left[ {a,b}\right],
	\end{array}%
	\right.  \nonumber
	\end{align}%
\end{cor}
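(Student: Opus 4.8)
The plan is to derive this corollary as the limiting case $v \to u^+$ of Theorem \ref{thm4.5.7}, following exactly the pattern used for the earlier corollaries of this type. First I would fix $u$ with $a < u < b$, pick $\epsilon > 0$ small enough that $u + \epsilon \le b$, and apply Theorem \ref{thm4.5.7} with $v = u + \epsilon$; this is legitimate because then $a \le u < v \le b$, so both inequalities of \eqref{eq4.4.26} hold with $v$ replaced by $u+\epsilon$.

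Next I would let $\epsilon \to 0^+$. On the left-hand side, $\mathcal{T}_a^{u+\epsilon}(f,g) \to \mathcal{T}_a^u(f,g)$: writing $\mathcal{T}_a^v(f,g) = \frac{1}{v-a}\int_a^v fg\,dt - \frac{1}{(v-a)^2}\int_a^v f\,dt\int_a^v g\,dt$, each constituent integral is a continuous function of its upper limit $v$ (the integrands being integrable) and the factor $v-a$ stays bounded away from $0$ since $u>a$; hence the whole modulus passes to the limit. On the right-hand side, rather than take the limit in the fully simplified form of \eqref{eq4.4.26}, I would use the intermediate expressions appearing one line earlier inside the proof of Theorem \ref{thm4.5.7}, namely $\tfrac16 L\|g'\|_\infty\big[\tfrac{(v-a)^2+(b-u)^2}{2}\big]$ in the $L_\infty$ case and $\tfrac{L}{(q+1)^{1/q}}\mathrm{B}(2,1+\tfrac1q)\|g'\|_{p,[a,b]}\big[(v-a)^2+(b-u)^2\big]$ in the $L_p$ case. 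Setting $v=u$ there simply replaces $(v-a)^2$ by $(u-a)^2$ and yields precisely \eqref{eq4.5.31}.

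Finally, for the special case I would substitute $u=\tfrac{a+b}{2}$ into \eqref{eq4.5.31}, using $(u-a)^2+(b-u)^2 = 2\big(\tfrac{b-a}{2}\big)^2 = \tfrac{(b-a)^2}{2}$. The $L_\infty$ bound then collapses to $\tfrac{L(b-a)^2}{24}\|g'\|_\infty$ and the $L_p$ bound to $\tfrac{L(b-a)^2}{2(q+1)^{1/q}}\mathrm{B}(2,1+\tfrac1q)\|g'\|_{p,[a,b]}$, which is exactly \eqref{eq4.5.32}.

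The only genuine point to verify—and where in principle the argument could fail—is the continuity of $v \mapsto \mathcal{T}_a^v(f,g)$ at $v=u$; but this is routine, since the defining integrals depend continuously on the endpoint and the normalizing factor $v-a$ is bounded below for $u>a$ (the degenerate case $u=a$ lying outside the range of interest). Everything else, in particular the passage of the Beta-function constant $\mathrm{B}(2,1+\tfrac1q)$ through the limit and the replacement of the sub-interval norms $\|g'\|_{p,[a,v]},\|g'\|_{p,[u,b]}$ by $\|g'\|_{p,[a,b]}$ already carried out inside the proof of Theorem \ref{thm4.5.7}, is immediate.
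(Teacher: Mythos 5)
Your proposal is correct and follows essentially the same route as the paper: the paper states this corollary without an explicit proof, but its sibling corollaries are all proved exactly as you do, by setting $v=u+\epsilon$ in the parent theorem and letting $\epsilon\to 0^{+}$ (with the continuity of $v\mapsto \mathcal{T}_a^{v}(f,g)$ for $u>a$ tacitly assumed, which you justify). Your decision to pass to the limit in the intermediate bounds $\tfrac{L}{6}\left\Vert g^{\prime}\right\Vert_{\infty}\tfrac{(v-a)^{2}+(b-u)^{2}}{2}$ and $\tfrac{L}{(q+1)^{1/q}}\,\mathrm{B}\left(2,1+\tfrac{1}{q}\right)\left\Vert g^{\prime}\right\Vert_{p,[a,b]}\left[(v-a)^{2}+(b-u)^{2}\right]$ from the proof of Theorem \ref{thm4.5.7}, rather than in the final displayed form of \eqref{eq4.4.26}, is exactly right, since \eqref{eq4.5.31} is stated in those terms (and the last rewriting in \eqref{eq4.4.26} contains an algebraic slip, so taking the limit there would not reproduce the corollary).
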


\begin{thm}
	\label{thm4.5.9} Let $f,g:[a,b]\rightarrow \mathbb{R}$ be such
	that $f$ is $L$--Lipschitzian on $[a,b]$ and $g$ is of
	$p$-$H$--H\"{o}lder type on $[a,b]$ where $p\in(0,1]$ and $H>0$
	are given, then
	\begin{align}
	\left\vert {\mathcal{T}_a^{v} \left( {f,g} \right) -
		\mathcal{T}_{u}^b \left( {f,g} \right)}\right\vert\label{eq4.5.33}
	\leq \frac{L H}{(p+1)^2(p+2)}  \cdot
	\left[{\frac{(b-a)+(v-u)}{2}+\left|{\frac{u+v}{2}-\frac{a+b}{2}}\right|}\right]^{p+1}.
	\end{align}
\end{thm}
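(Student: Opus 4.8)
The plan is to run the proof of Theorem~\ref{thm4.5.7} almost verbatim, the only change being that the H\"older hypothesis on $g$ forces us to estimate the inner differences of means by \eqref{Cer4.3.3} rather than by the Barnett-type bound \eqref{Bar4.3.1}. Since $f$ is $L$-Lipschitzian, I first apply the Riemann--Stieltjes estimate for Lipschitzian integrators \eqref{eq4.5.27} to the Cerone--Dragomir representation \eqref{eq4.5.3} written on $[a,v]$ and on $[u,b]$; this reproduces the splitting \eqref{eq4.5.28}, i.e. a bound by
\[
\frac{L}{v-a}\int_a^v (r-a)\left|\frac{1}{r-a}\int_a^r g(s)\,ds-\frac{1}{v-a}\int_a^v g(s)\,ds\right|dr
\]
plus the analogous integral over $[u,b]$ carrying the weight $(t-u)$.

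Next I estimate each inner difference of means. Applying \eqref{Cer4.3.3} on $[a,v]$ with $c=a$, $d=r$, the left-endpoint contribution $(c-a)^{p+1}$ vanishes, so the difference is at most $H(v-r)^p/(p+1)$; likewise on $[u,b]$ with $c=u$, $d=t$ one gets $H(b-t)^p/(p+1)$. Substituting these into the two integrals reduces the whole estimate to a single weighted power integral, whose evaluation is the computational core:
\[
\int_a^v (r-a)(v-r)^p\,dr=(v-a)^{p+2}\,\mathrm{B}(2,p+1)=\frac{(v-a)^{p+2}}{(p+1)(p+2)},
\]
together with its twin over $[u,b]$. After dividing by $v-a$ and $b-u$ respectively and collecting the constant $\tfrac{1}{(p+1)(p+2)}\cdot\tfrac{1}{p+1}$, this yields
\[
\left\vert {\mathcal{T}_a^{v}\left({f,g}\right)-\mathcal{T}_{u}^b\left({f,g}\right)}\right\vert\le\frac{LH}{(p+1)^2(p+2)}\left[(v-a)^{p+1}+(b-u)^{p+1}\right].
\]
The Beta-integral step is routine; the only place to be careful is tracking the exponents $p+1$ versus $p+2$.

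The final step, and the genuine obstacle, is to collapse the two summands into the single bracketed quantity of \eqref{eq4.5.33}. Here I would invoke the superadditivity of $t\mapsto t^{p+1}$ on $[0,\infty)$ (valid since $p+1\ge1$), namely $X^{p+1}+Y^{p+1}\le(X+Y)^{p+1}$ for $X,Y\ge0$, with $X=v-a$, $Y=b-u$; because $(v-a)+(b-u)=(b-a)+(v-u)$ this produces a clean bound governed by the total length $(b-a)+(v-u)$. The delicate point is reconciling this with the symmetric form written in \eqref{eq4.5.33}: one must remember the identity $\tfrac{X+Y}{2}+\tfrac{|X-Y|}{2}=\max\{X,Y\}$, so the bracket $\tfrac{(b-a)+(v-u)}{2}+\left|\tfrac{u+v}{2}-\tfrac{a+b}{2}\right|$ actually equals $\max\{v-a,\,b-u\}$ and not the full sum. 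Thus the cleanest rigorous conclusion reachable from the sum of $(p+1)$-powers is the superadditive bound $\tfrac{LH}{(p+1)^2(p+2)}\,[(b-a)+(v-u)]^{p+1}$, and matching the exact bracket and constant displayed in \eqref{eq4.5.33} (the step parallel to the max-collapse used in Theorems~\ref{thm4.5.1} and~\ref{thm4.5.3}, where a second total-variation factor supplied the extra slack) is where the argument must be handled with the most care.
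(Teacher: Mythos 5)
Your proposal follows the paper's proof line for line up to the last step: the Lipschitz Riemann--Stieltjes estimate \eqref{eq4.5.27} applied on $[a,v]$ and $[u,b]$, the H\"older-type bound \eqref{Cer4.3.3} with $c=a,d=r$ (resp.\ $c=u,d=t$) so that the left-endpoint term vanishes, and the Beta-integral evaluation $\int_a^v(r-a)(v-r)^p\,dr=(v-a)^{p+2}\,\mathrm{B}(2,p+1)=\frac{(v-a)^{p+2}}{(p+1)(p+2)}$, arriving at exactly the paper's intermediate bound $\frac{LH}{(p+1)^2(p+2)}\left[(v-a)^{p+1}+(b-u)^{p+1}\right]$. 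Up to that point your argument and the paper's coincide, and both are correct.

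The divergence is at the final collapse, and your diagnosis is exactly right---indeed you have caught a genuine flaw in the paper's own proof rather than a gap in yours. Since, as you note, the bracket in \eqref{eq4.5.33} equals $\max\{v-a,\,b-u\}$, the paper's last inequality amounts to asserting $(v-a)^{p+1}+(b-u)^{p+1}\le\left[\max\{v-a,\,b-u\}\right]^{p+1}$, which is false whenever both lengths are positive (take $v-a=b-u$, where it fails by a factor of $2$; note $\frac{LH}{p+1}\mathrm{B}(p+1,2)=\frac{LH}{(p+1)^2(p+2)}$, so no constant is gained in that step). In Theorems \ref{thm4.5.1} and \ref{thm4.5.3} the analogous max-step is legitimate precisely because each summand carries a factor $\bigvee_a^v(f)$ or $\bigvee_u^b(f)$, each dominated by $\bigvee_a^b(f)$, so the factor of $2$ is absorbed there; in Theorem \ref{thm4.5.9} no such second factor exists, which is the structural point you identified. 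The rigorous conclusions reachable from the intermediate bound are either $\frac{2LH}{(p+1)^2(p+2)}\left[\max\{v-a,\,b-u\}\right]^{p+1}$ (the printed bracket with the constant doubled) or your superadditivity bound $\frac{LH}{(p+1)^2(p+2)}\left[(b-a)+(v-u)\right]^{p+1}$, using $X^{p+1}+Y^{p+1}\le(X+Y)^{p+1}$ for $X,Y\ge0$ and $p+1\ge1$; neither of these two corrected forms dominates the other in all configurations. So your derivation is sound, your fallback is valid, and the inequality \eqref{eq4.5.33} as printed is not established by the paper's argument.
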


\begin{proof}
	We repeat the proof of Theorem \ref{thm4.5.7}. As $f$ is
	$L$--Lipschitzian and $g$ is of $p$-$H$--H\"{o}lder type on
	$[a,b]$, by (\ref{Cer4.3.3}) we have
	\begin{align*}
	&\left\vert {\mathcal{T}_a^{v} \left( {f,g} \right) -
		\mathcal{T}_{u}^b \left( {f,g} \right)}\right\vert
	\\
	& \leq \frac{L}{v-a}\int_{a}^{v}{\left\vert {\left( {r-a}%
			\right) \left[ {\frac{1}{{r-a}}\int_{a}^{r}{g\left( s\right) ds}-\frac{1}{{%
						v-a}}\int_{a}^{v}{g\left( s\right) ds}}\right] }\right\vert dr}
	\\
	&\qquad +\frac{L}{b-u}\int_{u}^{b}{\left\vert {\left( {t-u}%
			\right) \left[ {\frac{1}{{t-u}}\int_{u}^{t}{g\left( u\right) du}-\frac{1}{{%
						b-u}}\int_{u}^{b}{g\left( u\right) du}}\right] }\right\vert dt}
	\\
	& \leq \frac{LH}{(p+1)(v-a)}\int_{a}^{v}{\left( {r-a}\right)
		\left( {v-r}\right)^{p}dr}
	+\frac{LH}{(p+1)(b-u)}\int_{u}^{b}{\left( {t-u}\right) \left(
		{b-t}\right)^{p}dt}
	\\
	& =\frac{LH(v-a)^{p+1}}{(p+1)^2(p+2)}
	+\frac{LH(b-u)^{p+1}}{(p+1)^2(p+2)}
\\
	&\le \frac{LH}{(p+1)} {\rm{B}}\left({p+1,2}\right) \cdot
	\left[\max\{{(v-a),(b-u)}\}\right]^{p+1}
	\\
	&= \frac{LH}{(p+1)^2(p+2)}  \cdot
	\left[{\frac{(b-a)+(v-u)}{2}+\left|{\frac{u+v}{2}-\frac{a+b}{2}}\right|}\right]^{p+1},
	\end{align*}%
	where  for the last inequality  a simple calculation yields that
	\begin{align*}
	\int_{a}^{b}{\left( {t-a}\right) \left( {b-t}\right) ^{p}dt}=\left( {b-a}%
	\right) ^{p+2}\int_{0}^{1}{\left( {1-t}\right) t^{p}dt}=\frac{\left( {b-a}%
		\right) ^{p+2}}{{\left( {p+1}\right) \left( {p+2}\right) }},
	\end{align*}
	which completes the proof.
\end{proof}

\begin{cor}
	\label{cor4.5.10} Let $f,g$ be two Lipschitzian mappings on
	$[a,b]$ with Lipschitz constants $L_f, L_g>0$, then
	\begin{align}
	\left\vert {\mathcal{T}_a^{v} \left( {f,g} \right) -
		\mathcal{T}_{u}^b \left( {f,g} \right)}\right\vert\label{eq4.5.34}
	\leq \frac{L_fL_g}{12}  \cdot
	\left[{\frac{(b-a)+(v-u)}{2}+\left|{\frac{u+v}{2}-\frac{a+b}{2}}\right|}\right]^{2}.
	\end{align}
	Moreover,
	\begin{align}
	\left\vert {\mathcal{T}_a^{u} \left( {f,g} \right) -
		\mathcal{T}_{u}^b \left( {f,g} \right)}\right\vert
	\label{eq4.5.35} \le \frac{L_fL_g}{12}  \cdot
	\left[{\frac{b-a}{2}+\left|{u-\frac{a+b}{2}}\right|}\right]^{2},
	\end{align}
	for all $a\le u \le b$. In particular case if $u=\frac{a+b}{2}$,
	we have
	\begin{align}
	& \left\vert {\mathcal{T}_a^{\frac{a+b}{2}} \left( {f,g} \right) -
		\mathcal{T}_{\frac{a+b}{2}}^b \left( {f,g} \right)}\right\vert
	\label{eq4.5.36}\le \frac{1}{24} L_f L_g\left({b-a}\right)^2.
	\end{align}
\end{cor}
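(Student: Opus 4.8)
The plan is to specialize Theorem~\ref{thm4.5.9} to the situation where $g$ is Lipschitzian rather than merely H\"{o}lder continuous. The key observation is that an $L_g$--Lipschitzian function on $[a,b]$ is exactly a $p$-$H$--H\"{o}lder function with $p=1$ and $H=L_g$, since $|g(x)-g(y)|\le L_g|x-y|=L_g|x-y|^{1}$. I would therefore apply Theorem~\ref{thm4.5.9} with $L=L_f$, $p=1$ and $H=L_g$. Under this substitution the leading constant becomes $\frac{LH}{(p+1)^2(p+2)}=\frac{L_fL_g}{2^2\cdot 3}=\frac{L_fL_g}{12}$ and the exponent $p+1$ becomes $2$, which yields \eqref{eq4.5.34} directly.

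For \eqref{eq4.5.35} I would use the same limiting device as in the preceding corollaries: fix $\epsilon>0$, put $v=u+\epsilon$ in \eqref{eq4.5.34}, and let $\epsilon\to 0^{+}$. Then $v-u\to 0$ and $\mathcal{T}_a^{v}(f,g)\to\mathcal{T}_a^{u}(f,g)$, so the left-hand side tends to $|\mathcal{T}_a^{u}(f,g)-\mathcal{T}_u^{b}(f,g)|$ while the bracketed factor on the right converges to $\frac{b-a}{2}+\left|u-\frac{a+b}{2}\right|$. This produces \eqref{eq4.5.35}.

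The only point needing care is the sharper constant $\frac{1}{24}$ in the midpoint case \eqref{eq4.5.36}, which does \emph{not} come from substituting $u=\frac{a+b}{2}$ into \eqref{eq4.5.35} (that route gives only the weaker $\frac{1}{48}$). Instead I would return to the intermediate estimate inside the proof of Theorem~\ref{thm4.5.9}, namely the bound $\frac{L_fL_g}{12}\bigl[(v-a)^2+(b-u)^2\bigr]$ obtained just before the step that replaces the sum of the two terms by their maximum. Setting $u=v=\frac{a+b}{2}$ there gives $(v-a)^2=(b-u)^2=\bigl(\frac{b-a}{2}\bigr)^2$, so the sum equals $\frac{(b-a)^2}{2}$ and the bound collapses to $\frac{L_fL_g}{24}(b-a)^2$, which is precisely \eqref{eq4.5.36}. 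Thus the main (and only modest) obstacle is recognizing that the extremal midpoint case must be derived from the pre-maximization form of the estimate rather than from the final packaged inequality.
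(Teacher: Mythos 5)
Your handling of \eqref{eq4.5.34} and \eqref{eq4.5.35} is exactly the paper's proof: put $p=1$, $H=L_g$, $L=L_f$ in Theorem \ref{thm4.5.9}, and then set $v=u+\epsilon$ and let $\epsilon\to 0^{+}$. The only place you diverge is \eqref{eq4.5.36}, and there your stated motivation has the comparison backwards: substituting $u=\frac{a+b}{2}$ into \eqref{eq4.5.35} gives $\frac{L_fL_g}{12}\left(\frac{b-a}{2}\right)^{2}=\frac{L_fL_g}{48}\left(b-a\right)^{2}$, and since $\frac{1}{48}\le\frac{1}{24}$ this is a \emph{sharper} bound than \eqref{eq4.5.36}, not a weaker one; it yields \eqref{eq4.5.36} a fortiori, so no detour is logically required (this direct substitution is evidently what the paper intends, its stated constant simply not being tight relative to its own \eqref{eq4.5.35}). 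That said, your detour through the pre-maximization estimate $\frac{L_fL_g}{12}\left[(v-a)^{2}+(b-u)^{2}\right]$ is itself correct, gives exactly $\frac{1}{24}$ at $u=v=\frac{a+b}{2}$, and is actually the more defensible derivation: the maximization step in the proof of Theorem \ref{thm4.5.9} implicitly uses $A^{p+1}+B^{p+1}\le\left[\max\{A,B\}\right]^{p+1}$, which is false (take $A=B$; a factor of $2$ is dropped), so \eqref{eq4.5.33}, and hence \eqref{eq4.5.34} and \eqref{eq4.5.35}, are not validly established as stated, whereas the intermediate sum bound you fall back on is rigorously proved there. In short, your proof is correct and essentially the paper's; your reason for avoiding direct substitution in the midpoint case is erroneous, but the substitute argument you give happens to be the one fully rigorous route to \eqref{eq4.5.36}.
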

\begin{proof}
	In (\ref{eq4.5.33}), let $p=1$ we get  (\ref{eq4.5.34}). The
	inequality (\ref{eq4.5.35}) can be obtained by setting
	$v=u+\epsilon$, $\epsilon>0$, and letting $\epsilon\to 0^+$.
\end{proof}

\begin{thm} \label{thm4.5.12}Let $f,g:[a,b]\rightarrow \mathbb{R}$ be two
	absolutely continuous on $[a,b]$. If $f' \in L_{\alpha}[a,b]$,
	$\alpha,\beta>1$, $\frac{1}{\alpha}+\frac{1}{\beta}=1$, then
	\begin{multline}
	\left\vert {\mathcal{T}_a^{v} \left( {f,g} \right) -
		\mathcal{T}_{u}^b \left( {f,g} \right)}\right\vert\\\\
	\leq \left\{
	\begin{array}{l}
	\frac{ (v-a)^{\frac{1}{\beta}}+(b-u)^{\frac{1}{\beta}} }{2}\cdot
	{\rm B}^{\frac{1}{\beta}} \left( {\beta+1,\beta+1}
	\right)\cdot\left\|f'\right\|_{\alpha,[a,b]}\cdot
	\left\|g'\right\|_{\infty,[a,b]},\\\qquad\qquad\qquad\qquad\qquad\qquad\qquad\qquad\qquad\qquad{\rm{if}}\,\,\,\,g^{\prime }\in L_{\infty}\left[ {a,b}%
	\right]\\
	\\
	\frac{\left( {v - a} \right)^{1  + \frac{1}{\beta}}+\left( {b - u}
		\right)^{1  + \frac{1}{\beta}}
	}{{\left( {q+1}%
			\right)^{1/q}}}\cdot{\rm B}^{\frac{1}{\beta}}\left( {\beta
		+1,\frac{\beta}{q} +1} \right)  \left\Vert { g^{\prime
	}}\right\Vert _{p,[a,b]}\left\|f'\right\|_{\alpha,[a,b]};
	\\
	\qquad\qquad\qquad\qquad\qquad\qquad\qquad\qquad\qquad{\rm{if}}\,\,\,\,g^{\prime }\in L_{p}\left[ {a,b}%
	\right]\\
	\,\,\,\,\,\,\,\,\,\,\,\,\,\,\,\,\,\,\,\,\,\,\,\,\,\,\,\,\,\,\,\,\,\,\,\,\,\,\,\,\,\,
	\,\,\,\,\,\,\,\,\,\,\,\,\,\,\,\,\,\,\,\,\,\,\,\,\,\,\,\,\,\,\,\,\,\,\,\,\,\,\,\,\,\,
	\,\,\,\,\,\,\,\,\,\,\,\,\,\,\,\,\,\,\, p>1 ,\,\,\frac{1}{p}+\frac{1}{q}=1, \\
	\\
	\left[{\left( {v - a} \right)^{1 + \frac{1}{\beta}}+ \left( {b -
			u} \right)^{1 + \frac{1}{\beta}} }\right] \cdot {\rm
		B}^{\frac{1}{\beta}}\left( {\beta +1,\beta +1}
	\right)\cdot\left\Vert {g^{\prime }}\right\Vert
	_{1,[a,b]}\left\|f'\right\|_{\alpha,[a,b]},\\\qquad\qquad\qquad\qquad\qquad\qquad\qquad\qquad\qquad\qquad\qquad{\rm{if}}\,\,\,\,g^{\prime }\in L_{1}\left[ {a,b}%
	\right]
	\end{array}%
	\right.\label{eq4.5.37}
	\end{multline}%
\end{thm}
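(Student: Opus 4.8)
The plan is to run the same scheme as in the proofs of Theorems \ref{thm4.5.1} and \ref{thm4.5.7}, but to exploit the absolute continuity of $f$ through $df(t)=f'(t)\,dt$ together with H\"older's inequality rather than a total-variation or Lipschitz estimate. Writing the difference by means of the $\Psi_g$-representation,
\begin{align*}
\mathcal{T}_a^{v}\left({f,g}\right)-\mathcal{T}_u^{b}\left({f,g}\right)
=-\frac{1}{v-a}\int_a^v \Psi_g\left({t;a,v}\right)f'(t)\,dt
+\frac{1}{b-u}\int_u^b \Psi_g\left({r;u,b}\right)f'(r)\,dr,
\end{align*}
I would apply the triangle inequality to split into an $[a,v]$-piece and a $[u,b]$-piece, and then on each piece use H\"older with exponents $\alpha,\beta$. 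Since $f'\in L_\alpha$, this gives
\begin{align*}
\frac{1}{v-a}\left|\int_a^v \Psi_g\left({t;a,v}\right)f'(t)\,dt\right|
\le \frac{1}{v-a}\left\|\Psi_g\left({\cdot;a,v}\right)\right\|_{\beta,[a,v]}\left\|f'\right\|_{\alpha,[a,v]},
\end{align*}
so the whole task reduces to estimating the $L_\beta$-norm of $\Psi_g$ on each subinterval.

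For that estimate I would use the factorization $\Psi_g\left({t;a,v}\right)=(t-a)\left[\frac{1}{t-a}\int_a^t g(s)\,ds-\frac{1}{v-a}\int_a^v g(s)\,ds\right]$ and bound the bracketed difference of integral means by the branch of \eqref{Bar4.3.1}--\eqref{Cer4.3.2} matching the hypothesis on $g'$, exactly as in \eqref{eq4.5.5} and \eqref{eq4.5.8}. This produces pointwise bounds of the shape $\left|\Psi_g\left({t;a,v}\right)\right|\le C(g)\,(t-a)(v-t)^{\gamma}$ (with $\gamma=1$ in the $L_\infty$ and $L_1$ cases and $\gamma=\tfrac1q$ in the $L_p$ case, the constant $C(g)$ carrying an extra $\tfrac{1}{v-a}$ in the $L_1$ case). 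Raising to the $\beta$-th power and integrating, every resulting integral is of the type
\begin{align*}
\int_a^v (t-a)^{\beta}(v-t)^{\beta\gamma}\,dt=(v-a)^{\,\beta+\beta\gamma+1}\,{\rm B}\left({\beta+1,\beta\gamma+1}\right),
\end{align*}
which is exactly where the Beta functions with the stated arguments (${\rm B}(\beta+1,\beta+1)$ and ${\rm B}(\beta+1,\tfrac{\beta}{q}+1)$) come from; dividing by $v-a$ and extracting the $\beta$-th root yields the powers of $(v-a)$ recorded in \eqref{eq4.5.37}, and the companion $(b-u)$ terms arise from the identical computation on $[u,b]$.

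The step I expect to be the real obstacle is the $L_p$ case, where the difference of means carries the extra factor $\left[(t-a)^q+(v-t)^q\right]^{1/q}$, which does not by itself integrate to a Beta function. Here I would invoke the elementary inequality $(t-a)^q+(v-t)^q\le (v-a)^q$ for $q\ge1$ (the left side is convex in $t$, hence maximal at the endpoints), which bounds the offending factor by $(v-a)^{\beta}$ after the $\beta$-th power and leaves the clean integral $\int_a^v (t-a)^{\beta}(v-t)^{\beta/q}\,dt$ handled above. The proof then closes by enlarging the subinterval norms to the whole interval, $\left\|f'\right\|_{\alpha,[a,v]},\left\|f'\right\|_{\alpha,[u,b]}\le\left\|f'\right\|_{\alpha,[a,b]}$ and likewise for $g'$, and summing the two contributions to recover the sums $(v-a)^{\bullet}+(b-u)^{\bullet}$ in \eqref{eq4.5.37}. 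One bookkeeping check worth making while doing this is the power of $(v-a)$ in the $g'\in L_\infty$ branch: the computation above, consistently with the $q\to1$ limit of the $L_p$ branch, produces $(v-a)^{1+1/\beta}$, so that exponent should read $1+\tfrac1\beta$.
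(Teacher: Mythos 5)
Your proposal follows essentially the same route as the paper's own proof: the $\Psi_g$-representation split by the triangle inequality, H\"older's inequality with exponents $(\alpha,\beta)$ applied to $\frac{1}{v-a}\int_a^v\Psi_g\left({t;a,v}\right)f'(t)\,dt$, the factorization $\Psi_g\left({t;a,v}\right)=(t-a)\left[{\frac{1}{t-a}\int_a^t g-\frac{1}{v-a}\int_a^v g}\right]$ with the relevant branch of \eqref{Bar4.3.1}--\eqref{Cer4.3.2} inserted pointwise, Beta-function evaluation of $\int_a^v(t-a)^{\beta}(v-t)^{\beta\gamma}dt$, and, in the $L_p$ case, the bound $\left[{(t-a)^q+(v-t)^q}\right]^{1/q}\le v-a$ --- the paper performs exactly this step by taking $\sup_{r\in[a,v]}\left[{(r-a)^q+(v-r)^q}\right]^{\beta/q}=(v-a)^{\beta}$, so this is not an extra idea on your part but the same move. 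Your bookkeeping flag on the $L_\infty$ branch is also correct: the paper's own inequality \eqref{eq4.5.5} reads $\left|{\frac{1}{t-a}\int_a^t g-\frac{1}{v-a}\int_a^v g}\right|\le\frac{v-t}{2}\left\Vert g'\right\Vert_{\infty,[a,v]}$, whereas the proof of Theorem \ref{thm4.5.12} inserts a spurious extra factor $\frac{1}{v-a}$ into this bound; with the correct \eqref{eq4.5.5}, H\"older gives $\frac{(v-a)^{1+1/\beta}}{2}\,{\rm B}^{1/\beta}\left({\beta+1,\beta+1}\right)\left\Vert f'\right\Vert_{\alpha}\left\Vert g'\right\Vert_{\infty}$ on the first piece, so the exponent in the first branch of \eqref{eq4.5.37} should indeed be $1+\frac{1}{\beta}$, as you say (a scaling check $t\mapsto\lambda t$ confirms this).

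The one place your write-up slips is the $L_1$ branch. You assert that your computation reproduces the powers recorded in \eqref{eq4.5.37}, but by your own accounting the constant there carries an extra $\frac{1}{v-a}$ (the mean-difference bound being $\frac{v-t}{v-a}\left\Vert g'\right\Vert_{1,[a,v]}$), so after the Beta integral, the $\beta$-th root and the division by $v-a$ you obtain $(v-a)^{1/\beta}\,{\rm B}^{1/\beta}\left({\beta+1,\beta+1}\right)\left\Vert g'\right\Vert_{1}\left\Vert f'\right\Vert_{\alpha}$, i.e.\ exponent $\frac{1}{\beta}$, not the $1+\frac{1}{\beta}$ stated in the theorem. (The paper commits the mirror-image slip: in the final substitution after \eqref{eq4.5.39}, the prefactor $\frac{1}{v-a}$ from \eqref{eq4.5.38} is retained in the $L_p$ branch but dropped in the $L_1$ branch.) So the consistency check you ran on the first branch should also have been run on the third: your method, carried out exactly as you describe it, proves the result with exponents $1+\frac{1}{\beta}$, $1+\frac{1}{\beta}$, $\frac{1}{\beta}$ in the three branches respectively, which \emph{corrects} the statement rather than reproducing it. Only the $L_p$ branch agrees with \eqref{eq4.5.37} as printed, and there both you and the paper take \eqref{Cer4.3.2} (hence \eqref{eq4.5.29}) at face value.
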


\begin{proof}
	Taking the absolute value in (\ref{eq4.2.14}) and utilizing the
	triangle inequality. As $f^{\prime } \in L_{\alpha}([a,b])$, by
	H\"{o}lder inequality we have
	\begin{align}
	&\left\vert {\mathcal{T}_a^{v} \left( {f,g} \right) -
		\mathcal{T}_{u}^b \left( {f,g} \right)}\right\vert
	\label{eq4.5.38}\\
	&\leq \frac{1}{v-a}\int_{a}^{v}{\left\vert {\left( {r-a}\right) \left[ {\frac{1}{{r-a}}%
				\int_{a}^{r}{g\left( s\right)
					ds}-\frac{1}{{v-a}}\int_{a}^{v}{g\left( s\right) ds}}\right]
		}\right\vert \left\vert f'\left( {r}\right) \right\vert dr}
	\nonumber\\
	&\qquad+\frac{1}{b-u}\int_{u}^{b}{\left\vert {\left( {t-u}\right) \left[ {\frac{1}{{t-u}}%
				\int_{u}^{t}{g\left( s\right)
					ds}-\frac{1}{{b-u}}\int_{u}^{b}{g\left( s\right) ds}}\right]
		}\right\vert \left\vert f'\left( {t}\right) \right\vert dt}
	\nonumber\\
	&\leq \frac{1}{v-a}\left(\int_{a}^{v}{\left| {r-a}\right|^{\beta} \left| {\frac{1}{{r-a}}%
			\int_{a}^{r}{g\left( s\right)
				ds}-\frac{1}{{v-a}}\int_{a}^{v}{g\left( s\right)
				ds}}\right|^{\beta} dr}\right)^{1/{\beta}}
	\nonumber\\
	&\qquad\qquad\qquad\qquad\times\left(\int_{a}^{v}{\left\vert
		f'\left( {r}\right) \right\vert^{\alpha}
		dr}\right)^{1/{\alpha}}\nonumber
	\\
	&\qquad+\frac{1}{b-u}\left(\int_{a}^{b}{\left| {t-u}\right|^{\beta} \left| {\frac{1}{{t-u}}%
			\int_{u}^{t}{g\left( s\right)
				ds}-\frac{1}{{b-u}}\int_{u}^{b}{g\left( s\right)
				ds}}\right|^{\beta} dt}\right)^{1/{\beta}}
	\nonumber\\
	&\qquad\qquad\qquad\qquad\times\left(\int_{u}^{b}{\left\vert
		f'\left( {t}\right) \right\vert^{\alpha}
		dt}\right)^{1/{\alpha}}\nonumber
	\end{align}
	Now, in (\ref{Bar4.3.1}) put $d=r,t$ and $c=a,u$; respectively,
	then
	\begin{align*}
	\left| {\frac{1}{{r-a}}%
		\int_{a}^{r}{g\left( s\right)
			ds}-\frac{1}{{v-a}}\int_{a}^{v}{g\left( s\right) ds}}\right| \le
	\frac{v-r}{2(v-a)}\cdot \left\|g'\right\|_{\infty,[a,v]}
	\end{align*}
	and
	\begin{align*}
	\left| {\frac{1}{{t-u}}%
		\int_{u}^{t}{g\left( s\right)
			du}-\frac{1}{{b-u}}\int_{u}^{b}{g\left(s\right) ds}}\right| \le
	\frac{b-t}{2(b-u)}\cdot \left\|g'\right\|_{\infty,[u,b]}
	\end{align*}
	Substituting these inequalities in (\ref{eq4.5.38}) we get
	\begin{align*}
	&\left\vert {\mathcal{T}_a^{v} \left( {f,g} \right) -
		\mathcal{T}_{u}^b \left( {f,g} \right)}\right\vert
	\\
	&\leq \frac{1}{2(v-a)^2}
	\cdot\left\|f'\right\|_{\alpha,[a,v]}\cdot
	\left\|g'\right\|_{\infty,[a,v]} \left(\int_a^b{ \left(
		{r-a}\right)^{\beta} \left( {v-r}\right)^{\beta}dr}
	\right)^{\frac{1}{\beta}}
	\\
	&\qquad+ \frac{1}{2(b-u)^2}
	\cdot\left\|f'\right\|_{\alpha,[u,b]}\cdot
	\left\|g'\right\|_{\infty,[u,b]} \left(\int_a^b{ \left(
		{t-u}\right)^{\beta} \left( {b-t}\right)^{\beta}dt}
	\right)^{\frac{1}{\beta}}
	\\
	&=\frac{(v-a)^{\frac{1}{\beta}}}{2}\cdot {\rm B}^{\frac{1}{\beta}}
	\left( {\beta+1,\beta+1}
	\right)\cdot\left\|f'\right\|_{\alpha,[a,v]}\cdot
	\left\|g'\right\|_{\infty,[a,v]}
	\\
	&\qquad + \frac{(b-u)^{\frac{1}{\beta}}}{2}\cdot {\rm
		B}^{\frac{1}{\beta}} \left( {\beta+1,\beta+1}
	\right)\cdot\left\|f'\right\|_{\alpha,[u,b]}\cdot
	\left\|g'\right\|_{\infty,[u,b]}
	\\
	&\le \frac{ (v-a)^{\frac{1}{\beta}}+(b-u)^{\frac{1}{\beta}}
	}{2}\cdot {\rm B}^{\frac{1}{\beta}} \left( {\beta+1,\beta+1}
	\right)\cdot\left\|f'\right\|_{\alpha,[a,b]}\cdot
	\left\|g'\right\|_{\infty,[a,b]}
	\end{align*}
	which prove the first inequality in (\ref{eq4.5.37}).
	
	To prove the second and third inequalities in (\ref{eq4.5.37}), we
	apply (\ref{Cer4.3.2}) by setting $d=r,t$ and $c=a,u$;
	respectively, then we get
	\begin{align}
	&\int_{a}^{v}{\left| {r-a}\right|^{\beta} \left| {\frac{1}{{r-a}}%
			\int_{a}^{r}{g\left(s\right)
				ds}-\frac{1}{{v-a}}\int_{a}^{v}{g\left( s\right)
				ds}}\right|^{\beta}
		dr}   \nonumber\\
	&\leq \left\{
	\begin{array}{l}
	\frac{\left\Vert { g^{\prime }}\right\Vert^\beta _{p,[a,v]}
	}{{\left( {q+1}%
			\right)^{\beta/q}\left( {v-a}\right) ^{{\frac{\beta}{q}}}}}
	\int_a^v{\left( {r-a}\right)^{\beta}\left( {v-r}\right)
		^{\frac{\beta}{q}}  \left[ {\left( {r - a} \right)^q  + \left( {v
				- r} \right)^q } \right]^{\beta/q}dr},\\
	\qquad\qquad\qquad\qquad\qquad\qquad\qquad\qquad\text{if}\,\,\,g^{\prime
	}\in L_{p}\left[ {a,v}\right]
	;\,\,\, \\\qquad\qquad \qquad\qquad\qquad\qquad\qquad\qquad p>1 ,\,\,\frac{1}{p}+\frac{1}{q}=1, \\
	\nonumber\\
	\frac{1}{{(v-a)^{\beta}}}\left\Vert {g^{\prime
	}}\right\Vert^{\beta}
	_{1,[a,v]}\cdot \int_a^v{(r-a)^{\beta}(v-r)^{\beta}dr},\,\,\,\,\,\text{if}\,\,\,\,g^{\prime }\in L_{1}\left[ {a,v}%
	\right] .%
	\end{array}%
	\right.
	\nonumber\\
	&\le\left\{
	\begin{array}{l}
	\frac{\left\Vert { g^{\prime }}\right\Vert^\beta _{p,[a,v]}
	}{{\left( {q+1}%
			\right)^{\beta/q}\left( {v-a}\right) ^{{ \frac{\beta}{q}} }}}
	\mathop {\sup }\limits_{r\in \left[ {a,v}%
		\right] }  \left[ {\left( {r - a} \right)^q  + \left( {v - r}
		\right)^q } \right]^{\beta/q} \int_a^v{\left(
		{r-a}\right)^{\beta}\left( {v-r}\right)
		^{\textstyle{\frac{\beta}{q}}}dr},\\
	\qquad\qquad\qquad\qquad\qquad\qquad\qquad\qquad\text{if}\,\,\,g^{\prime
	}\in L_{p}\left[ {a,v}\right]
	;\,\,\, \\\qquad\qquad \qquad\qquad\qquad\qquad\qquad\qquad p>1 ,\,\,\frac{1}{p}+\frac{1}{q}=1, \\
	\\\\
	\frac{1}{{(v-a)^{\beta}}}\left\Vert {g^{\prime
	}}\right\Vert^{\beta} _{1,[a,v]}\cdot  \left( {v - a}
	\right)^{2\beta + 1} {\rm B}\left( {\beta+1 ,\beta +1} \right)
	,\,\,\,\,\,\text{if}\,\,\,\,g^{\prime }\in L_{1}\left[ {a,v}%
	\right] .%
	\end{array}%
	\right.
	\nonumber\\ \nonumber\\
	&=\left\{
	\begin{array}{l}
	\frac{ \left( {v - a} \right)^{\left( {2 + \frac{1}{q}}
			\right)\beta  + 1}
	}{{\left( {q+1}%
			\right)^{\beta/q}\left( {v-a}\right)
			^{{\frac{\beta}{q}}}}}\cdot{\rm B}\left( {\beta+1 ,\frac{\beta}{q}
		+1} \right)  \left\Vert { g^{\prime }}\right\Vert^\beta
	_{p,[a,v]},
	\,\,\,\,\,\,\,\,\,\,\,\,\text{if}\,\,\,g^{\prime
	}\in L_{p}\left[ {a,v}\right]
	;\,\,\, \\\qquad\qquad \qquad\qquad\qquad\qquad\qquad\qquad p>1 ,\,\,\frac{1}{p}+\frac{1}{q}=1,  \\
	\\
	\frac{1}{{(v-a)^{\beta}}}\left\Vert {g^{\prime
	}}\right\Vert^{\beta} _{1,[a,v]}\cdot  \left( {v - a}
	\right)^{2\beta +
		1} {\rm B}\left( {\beta+1 ,\beta +1} \right),\,\,\,\,\,\text{if}\,\,\,\,g^{\prime }\in L_{1}\left[ {a,v}%
	\right] . \label{eq4.5.39}
	\end{array}%
	\right..
	\end{align}
	Similarly, we have
	\begin{align}
	&\int_{u}^{b}{\left| {t-u}\right|^{\beta} \left| {\frac{1}{{t-u}}%
			\int_{u}^{t}{g\left(s\right)
				ds}-\frac{1}{{b-u}}\int_{u}^{b}{g\left( s\right)
				ds}}\right|^{\beta} dt}
	\nonumber\\
	&=\left\{
	\begin{array}{l}
	\frac{ \left( {b - u} \right)^{\left( {2 + \frac{1}{q}}
			\right)\beta  + 1}
	}{{\left( {q+1}%
			\right)^{\beta/q}\left( {b-u}\right)
			^{{\frac{\beta}{q}}}}}\cdot{\rm B}\left( {\beta+1 ,\frac{\beta}{q}
		+1} \right)  \left\Vert { g^{\prime }}\right\Vert^\beta
	_{p,[u,b]},
	\,\,\,\,\,\,\,\,\,\,\,\,\text{if}\,\,\,g^{\prime
	}\in L_{p}\left[ {u,b}\right]
	;\,\,\, \\\qquad\qquad \qquad\qquad\qquad\qquad\qquad\qquad\qquad p>1 ,\,\,\frac{1}{p}+\frac{1}{q}=1,  \\
	\\
	\frac{1}{{(b-u)^{\beta}}}\left\Vert {g^{\prime
	}}\right\Vert^{\beta} _{1,[u,b]}\cdot  \left( {b - u}
	\right)^{2\beta +
		1} {\rm B}\left( {\beta+1 ,\beta +1} \right),\,\,\,\,\,\text{if}\,\,\,\,g^{\prime }\in L_{1}\left[ {u,b}%
	\right] .%
	\end{array}%
	\right..\label{eq4.5.40}
	\end{align}
	Substituting (\ref{eq4.5.39}) and (\ref{eq4.5.40}) in
	(\ref{eq4.5.38}), we get
	\begin{align*}
	& \left\vert {\mathcal{T}_a^{v} \left( {f,g} \right) -
		\mathcal{T}_{u}^b \left( {f,g} \right)}\right\vert\\
	& \leq  \left\{
	\begin{array}{l}
	\frac{\left( {v - a} \right)^{1  + \frac{1}{\beta}}+\left( {b - u}
		\right)^{1  + \frac{1}{\beta}}
	}{{\left( {q+1}%
			\right)^{1/q}}}\cdot{\rm B}^{\frac{1}{\beta}}\left( {\beta
		+1,\frac{\beta}{q} +1} \right)  \left\Vert { g^{\prime
	}}\right\Vert _{p,[a,b]}\left\|f'\right\|_{\alpha,[a,b]}
	\\\\
	\left[{\left( {v - a} \right)^{1 + \frac{1}{\beta}}+ \left( {b -
			u} \right)^{1 + \frac{1}{\beta}} }\right] \cdot {\rm
		B}^{\frac{1}{\beta}}\left( {\beta +1,\beta +1}
	\right)\cdot\left\Vert {g^{\prime }}\right\Vert
	_{1,[a,b]}\left\|f'\right\|_{\alpha,[a,b]}
	\end{array}
	\right.
	\end{align*}
	for all $p,q,\alpha,\beta>1$ with $\frac{1}{p}+\frac{1}{q}=1$ and
	$\frac{1}{\alpha}+\frac{1}{\beta}=1$, which proves the second and
	the third inequalities in (\ref{eq4.5.37}).
\end{proof}

\begin{cor} \label{cor4.5.13} Under the assumptions of Theorem
	\ref{thm4.5.12}, we have
	\begin{multline}
	\left\vert {\mathcal{T}_a^{u} \left( {f,g} \right) -
		\mathcal{T}_{u}^b \left( {f,g} \right)}\right\vert \\\\
	\leq \left\{
	\begin{array}{l}
	\frac{ (u-a)^{\frac{1}{\beta}}+(b-u)^{\frac{1}{\beta}} }{2}\cdot
	{\rm B}^{\frac{1}{\beta}} \left( {\beta+1,\beta+1}
	\right)\cdot\left\|f'\right\|_{\alpha,[a,b]}\cdot
	\left\|g'\right\|_{\infty,[a,b]},\\\qquad\qquad\qquad\qquad\qquad\qquad\qquad\qquad\qquad\qquad{\rm{if}}\,\,\,\,g^{\prime }\in L_{\infty}\left[ {a,b}%
	\right]\\
	\\
	\frac{\left( {u - a} \right)^{1  + \frac{1}{\beta}}+\left( {b - u}
		\right)^{1  + \frac{1}{\beta}}
	}{{\left( {q+1}%
			\right)^{1/q}}}\cdot{\rm B}^{\frac{1}{\beta}}\left( {\beta
		+1,\frac{\beta}{q} +1} \right)  \left\Vert { g^{\prime
	}}\right\Vert _{p,[a,b]}\left\|f'\right\|_{\alpha,[a,b]};
	\\
	\qquad\qquad\qquad\qquad\qquad\qquad\qquad\qquad\qquad{\rm{if}}\,\,\,\,g^{\prime }\in L_{p}\left[ {a,b}%
	\right]\\
	\,\,\,\,\,\,\,\,\,\,\,\,\,\,\,\,\,\,\,\,\,\,\,\,\,\,\,\,\,\,\,\,\,\,\,\,\,\,\,\,\,\,
	\,\,\,\,\,\,\,\,\,\,\,\,\,\,\,\,\,\,\,\,\,\,\,\,\,\,\,\,\,\,\,\,\,\,\,\,\,\,\,\,\,\,
	\,\,\,\,\,\,\,\,\,\,\,\,\,\,\,\,\,\,\, p>1 ,\,\,\frac{1}{p}+\frac{1}{q}=1, \\
	\\
	\left[{\left( {u - a} \right)^{1 + \frac{1}{\beta}}+ \left( {b -
			u} \right)^{1 + \frac{1}{\beta}} }\right] \cdot {\rm
		B}^{\frac{1}{\beta}}\left( {\beta +1,\beta +1}
	\right)\cdot\left\Vert {g^{\prime }}\right\Vert
	_{1,[a,b]}\left\|f'\right\|_{\alpha,[a,b]},\\\qquad\qquad\qquad\qquad\qquad\qquad\qquad\qquad\qquad\qquad\qquad{\rm{if}}\,\,\,\,g^{\prime }\in L_{1}\left[ {a,b}%
	\right]
	\end{array}%
	\right..\label{eq4.5.41}
	\end{multline}%
	In particular case, if $u=\frac{a+b}{2}$ we get
	\begin{multline}
	\left\vert {\mathcal{T}_a^{\frac{a+b}{2}} \left( {f,g} \right) -
		\mathcal{T}_{\frac{a+b}{2}}^b \left( {f,g} \right)}\right\vert \\\\
	\leq \left\{
	\begin{array}{l}
	\left( {\frac{{b - a}}{2}} \right)^{\frac{1}{\beta }} \cdot {\rm
		B}^{\frac{1}{\beta}} \left( {\beta+1,\beta+1}
	\right)\cdot\left\|f'\right\|_{\alpha,[a,b]}\cdot
	\left\|g'\right\|_{\infty,[a,b]},\,\,{\rm{if}}\,\,\,\,g^{\prime }\in L_{\infty}\left[ {a,b}%
	\right]\\
	\\
	\frac{{\left( {b - a} \right)^{1 + \frac{1}{\beta }} }}{{2^{1 +
				\frac{1}{\beta }} \left( {q + 1} \right)^{\frac{1}{q}} }}
	\cdot{\rm B}^{\frac{1}{\beta}}\left( {\beta +1,\frac{\beta}{q} +1}
	\right)  \left\Vert { g^{\prime }}\right\Vert
	_{p,[a,b]}\left\|f'\right\|_{\alpha,[a,b]};\,\,
	{\rm{if}}\,\,\,\,g^{\prime }\in L_{p}\left[ {a,b}%
	\right]\\
	\,\,\,\,\,\,\,\,\,\,\,\,\,\,\,\,\,\,\,\,\,\,\,\,\,\,\,\,\,\,\,\,\,\,\,\,\,\,\,\,\,\,
	\,\,\,\,\,\,\,\,\,\,\,\,\,\,\,\,\,\,\,\,\,\,\,\,\,\,\,\,\,\,\,\,\,\,\,\,\,\,\,\,\,\,
	\,\,\,\,\,\,\,\,\,\,\,\,\,\,\,\,\,\,\, p>1 ,\,\,\frac{1}{p}+\frac{1}{q}=1, \\
	\\
	\left( {\frac{{b - a}}{2}} \right)^{1 + \frac{1}{\beta }} \cdot
	{\rm B}^{\frac{1}{\beta}}\left( {\beta +1,\beta +1}
	\right)\cdot\left\Vert {g^{\prime }}\right\Vert
	_{1,[a,b]}\left\|f'\right\|_{\alpha,[a,b]},\,{\rm{if}}\,\,\,\,g^{\prime }\in L_{1}\left[ {a,b}%
	\right]
	\end{array}%
	\right..\label{eq4.5.41}
	\end{multline}%
\end{cor}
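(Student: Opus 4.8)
The plan is to derive Corollary \ref{cor4.5.13} as a limiting instance of Theorem \ref{thm4.5.12}, exactly as the earlier corollaries were obtained from their parent theorems. Fix $u$ with $a\le u<b$, choose $\epsilon>0$ small enough that $u+\epsilon\le b$, and apply the bound (\ref{eq4.5.37}) with $v=u+\epsilon$. Because $f$ and $g$ are absolutely continuous, the functions $x\mapsto \int_a^x f(t)g(t)\,dt$, $x\mapsto\int_a^x f(t)\,dt$ and $x\mapsto\int_a^x g(t)\,dt$ are continuous, so
$$\mathcal{T}_a^x(f,g)=\frac{1}{x-a}\int_a^x f(t)g(t)\,dt-\frac{1}{(x-a)^2}\int_a^x f(t)\,dt\int_a^x g(t)\,dt$$
is continuous in $x$ on $(a,b]$. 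Hence $\mathcal{T}_a^{u+\epsilon}(f,g)\to\mathcal{T}_a^{u}(f,g)$ as $\epsilon\to0^+$, and the left side of (\ref{eq4.5.37}) tends to $\left|\mathcal{T}_a^{u}(f,g)-\mathcal{T}_u^b(f,g)\right|$.

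On the right side of (\ref{eq4.5.37}) the dependence on $v$ enters only through the elementary factors $(v-a)^{1/\beta}$ and $(v-a)^{1+1/\beta}$; the Beta-function constants and the norms $\|f'\|_{\alpha,[a,b]}$, $\|g'\|_{\infty,[a,b]}$, $\|g'\|_{p,[a,b]}$, $\|g'\|_{1,[a,b]}$ are independent of $v$. Letting $\epsilon\to0^+$ therefore replaces every $(v-a)$ by $(u-a)$ and produces (\ref{eq4.5.41}) in all three cases. For the special value $u=\frac{a+b}{2}$ I would substitute directly: since $u-a=b-u=\frac{b-a}{2}$, the first case collapses $\frac{(u-a)^{1/\beta}+(b-u)^{1/\beta}}{2}$ to $\left(\frac{b-a}{2}\right)^{1/\beta}$, while in the remaining two cases the symmetric sum $(u-a)^{1+1/\beta}+(b-u)^{1+1/\beta}$ equals $2\left(\frac{b-a}{2}\right)^{1+1/\beta}$, which I then rewrite as a single power of $b-a$ divided by a power of $2$; the Beta and norm factors carry through unchanged.

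The only step that is more than bookkeeping is the justification of passing to the limit $\epsilon\to0^+$, i.e. that both sides of (\ref{eq4.5.37}) are continuous in the upper endpoint $v$. Continuity of the left side is exactly the continuity of $x\mapsto\mathcal{T}_a^x(f,g)$ noted above, a consequence of the absolute continuity of $f$ and $g$; continuity of the right side is immediate from the explicit form of its $v$-dependence. I do not expect any substantive difficulty beyond keeping track of the exponents $1/\beta$ and $1+1/\beta$ through the three cases, and the boundary values $u=a$ and $u=b$ are degenerate (one of the two functionals reduces to a point-interval) and may be treated trivially.
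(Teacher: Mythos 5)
Your proposal is correct and follows essentially the same route the paper takes: the same $v=u+\epsilon$, $\epsilon\to 0^{+}$ limiting argument used for all the earlier corollaries, with the continuity of $x\mapsto\mathcal{T}_a^{x}(f,g)$ (which the paper leaves implicit) properly justified. One remark worth keeping: your substitution at $u=\frac{a+b}{2}$ correctly gives $(u-a)^{1+\frac{1}{\beta}}+(b-u)^{1+\frac{1}{\beta}}=2\left(\frac{b-a}{2}\right)^{1+\frac{1}{\beta}}=\frac{(b-a)^{1+\frac{1}{\beta}}}{2^{1/\beta}}$, so in the second and third cases of the paper's displayed midpoint inequality the denominators $2^{1+\frac{1}{\beta}}$ (equivalently the single factor $\left(\frac{b-a}{2}\right)^{1+\frac{1}{\beta}}$ without the leading $2$) understate the bound by a factor of $2$ --- the constant your computation produces is the one that actually follows from \eqref{eq4.5.37}.
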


\begin{rem}
	For the second inequality in (\ref{eq4.5.37}) we have the
	following particular cases:
	\begin{enumerate}
		\item If $\alpha=p$ and $\beta=q$, then we have
		\begin{align}
		 \left\vert {\mathcal{T}_a^{v} \left( {f,g} \right) -
			\mathcal{T}_{u}^b \left( {f,g} \right)}\right\vert
		 \le\frac{\left( {v - a} \right)^{1  + \frac{1}{q}}+\left( {b - u}
			\right)^{1  + \frac{1}{q}}
		}{{\left( {q+1}%
				\right)^{1/q}}}\cdot{\rm B}^{\frac{1}{q}}\left( {q +1,2} \right)
		\left\Vert { g^{\prime }}\right\Vert
		_{p,[a,b]}\left\|f'\right\|_{p,[a,b]}.
		\end{align}
		\noindent Therefore, as $v\to u^+$ we have
		\begin{align}
		 \left\vert {\mathcal{T}_a^{u} \left( {f,g} \right) -
			\mathcal{T}_{u}^b \left( {f,g} \right)}\right\vert
		 \le\frac{\left( {u - a} \right)^{1  + \frac{1}{q}}+\left( {b - u}
			\right)^{1  + \frac{1}{q}}
		}{{\left( {q+1}%
				\right)^{1/q}}}\cdot{\rm B}^{\frac{1}{q}}\left( {q +1,2} \right)
		\left\Vert { g^{\prime }}\right\Vert
		_{p,[a,b]}\left\|f'\right\|_{p,[a,b]},
		\end{align}
		and for $u=\frac{a+b}{2}$ we have
		\begin{align}
		 \left\vert {\mathcal{T}_a^{\frac{a+b}{2}} \left( {f,g} \right) -
			\mathcal{T}_{\frac{a+b}{2}}^b \left( {f,g} \right)}\right\vert
		 \le\frac{{\left( {b - a} \right)^{1 + \frac{1}{q}} }}{{2^{1 +
					\frac{1}{q }} \left( {q + 1} \right)^{\frac{1}{q}} }}\cdot{\rm
			B}^{\frac{1}{q}}\left( {q +1,2} \right) \left\Vert { g^{\prime
		}}\right\Vert _{p,[a,b]}\left\|f'\right\|_{p,[a,b]}.
		\end{align}
		
		\item If $\alpha=q$ and $\beta=p$, then we have
		\begin{align}
		 \left\vert {\mathcal{T}_a^{v} \left( {f,g} \right) -
			\mathcal{T}_{u}^b \left( {f,g} \right)}\right\vert
		 \le\frac{\left( {v - a} \right)^{1  + \frac{1}{p}}+\left( {b - u}
			\right)^{1  + \frac{1}{p}}
		}{{\left( {q+1}%
				\right)^{1/q}}}\cdot{\rm B}^{\frac{1}{p}}\left( {p +1,\frac{p}{q}
			+1} \right)  \left\Vert { g^{\prime }}\right\Vert
		_{p,[a,b]}\left\|f'\right\|_{q,[a,b]}.
		\end{align}
		Similarly, as $v\to u^+$, we have
		\begin{align}
		\left\vert {\mathcal{T}_a^{u} \left( {f,g} \right) -
			\mathcal{T}_{u}^b \left( {f,g} \right)}\right\vert
		\le\frac{\left( {u - a} \right)^{1  + \frac{1}{p}}+\left( {b - u}
			\right)^{1  + \frac{1}{p}}
		}{{\left( {q+1}%
				\right)^{1/q}}}\cdot{\rm B}^{\frac{1}{p}}\left( {p +1,\frac{p}{q}
			+1} \right)  \left\Vert { g^{\prime }}\right\Vert
		_{p,[a,b]}\left\|f'\right\|_{q,[a,b]},
		\end{align}
		and for $u=\frac{a+b}{2}$ we have
		\begin{align}
		\left\vert {\mathcal{T}_a^{\frac{a+b}{2}} \left( {f,g} \right) -
			\mathcal{T}_{\frac{a+b}{2}}^b \left( {f,g} \right)}\right\vert
		\le\frac{{\left( {b - a} \right)^{1 + \frac{1}{p}} }}{{2^{1 +
					\frac{1}{p}} \left( {q + 1} \right)^{\frac{1}{q}} }}\cdot{\rm
			B}^{\frac{1}{p}}\left( {p +1,\frac{p}{q} +1} \right)  \left\Vert {
			g^{\prime }}\right\Vert _{p,[a,b]}\left\|f'\right\|_{q,[a,b]},
		\end{align}
	\end{enumerate}
	for all $p,q>1$ with $\frac{1}{p}+\frac{1}{q}$.
\end{rem}

\begin{rem}
	In this work, all obtained bounds for the difference between two
	\v{C}eby\v{s}ev functional were taken under the assumption that
	$\left[a,v\right] \cap \left[u,b\right]= \left[u,v\right]$. The
	same bounds hold with a few changes in the case that
	$\left[u,v\right] \subset \left[a,b\right]$. Namely, replace every
	`$a$' (in the obtained results) by `$u$'; every `$u$' (in the
	obtained results) by `$a$' and accordingly the differences
	$(v-u),(b-a)$ instead of $(v-a),(b-u)$.
\end{rem}

\begin{rem}
	All obtained bounds hold for the \v{C}eby\v{s}ev functional
	$\left\vert {\mathcal{T}_a^{b} \left( {f,g} \right)}\right\vert$,
	this can be done by noting that  $\left\vert {\mathcal{T}_a^{v}
		\left( {f,g} \right) - \mathcal{T}_{u}^b \left( {f,g}
		\right)}\right\vert \longrightarrow \left\vert {\mathcal{T}_a^{b}
		\left( {f,g} \right)}\right\vert$ as $v=u \longrightarrow a $.
\end{rem}


\begin{thebibliography}{99}
	\setlength{\itemsep}{5pt}
	
	\bibitem{alomari1} M.W. Alomari, Bounds for the weighted
	Dragomir-Fedotov functional, \textit{Moroccan J. Pure and Appl.
		Anal.}  (MJPAA), 2 (2)  (2016), 65--78.
	
	
	\bibitem{alomari2} M.W. Alomari, New inequalities of
	Gr\"{u}ss-Lupa\c{s} type and applications to selfadjoint
	operators, \textit{ Armen. J. Math.}, 8 (1) (2016), pp. 25--37.
	
	\bibitem{alomari3} M.W. Alomari, New \v{C}eby\v{s}ev type inequalities and
	applications for functions of selfadjoint operators on complex
	Hilbert spaces, \textit{Chinese J. Math.}, Volume 2014, Article ID
	363050, 10 pages.
	
	\bibitem{alomari4}
	M.W. Alomari, Difference between two Stieltjes integral means,
	\textit{Kragujevac J. Math.}, 38(1) (2014), 35--49.
	
	\bibitem{Barnett} N.S. Barnett, P. Cerone, S.S. Dragomir and A.M. Fink,
	Comparing two integral means for absolutely continuous mappings
	whose derivatives are in $L_{\infty }[a,b]$ and applications,
	\textit{Comp. and Math. Appl.}, 44 (l/2) (2002), 241--251.
	
	
	
	\bibitem{Cerone1}  P. Cerone and S. S. Dragomir, Some new  Ostrowski-type bounds for the \v{C}eby\v{s}ev functional and applications,
	\textit{J. Math. Ineq.}, 8 (1) 2014, 159--170.
	
	
	\bibitem{Cerone2}P. Cerone and S.S. Dragomir, New bounds for the
	\v{C}eby\v{s}ev functional, \textit{Appl. Math. Let.}, 18 2005,
	603--611.
	
	
	\bibitem{Cerone3}P. Cerone and S.S. Dragomir, Differences between means with
	bounds from a Riemann--Stieltjes integral, \textit{Comp. Math.
		Appl.}, 46 (2003) 445--453
	
	\bibitem{Cerone4}P. Cerone and S.S. Dragomir, New upper and lower bounds for the
	\v{C}eby\v{s}ev functional, \textit{JIPAM}, 3 (5) 2002, Article
	77.
	
	\bibitem{Cerone5}P. Cerone, An identity for the Chebychev
	functional and some ramifications, \textit{JIPAM},  3 (1) 2001,
	Article 4.
	
	
	\bibitem{Cebysev}P.L. \v{C}eby\v{s}ev, Sur les expressions approximatives des int\`{e}grals
	d\`{e}finis par les outres prises entre les m\^{e}me limites,
	\textit{Proc. Math. Soc. Charkov}, 2 (1882), 93--98.
	
	
	
	\bibitem{Dragomir1}
	S. S. Dragomir, Accurate approximation for the Riemann--Stieltjes integral via
	theory of inequalities, \textit{J. Math. Ineq.},  3 (4) (2009),
	663--681.
	
	\bibitem{Dragomir2}S.S. Dragomir, A generalisation of Cerone's identity and
	applications, \textit{Tamsui Oxford J. Math. Sci.}, 23 (1) 2007,
	79--90.
	
	
	\bibitem{Gruss}
	G. Gr\"{u}ss, \"{U}ber das Maximum des absoluten Betrages von $
	\frac{1}{{b - a}}\int_a^b {f\left( x \right)g\left( x \right)dx} -
	\frac{1}{{(b - a)^2}}\int_a^b {f\left( x \right)dx} \int_a^b
	{g\left( x \right)dx}$, \textit{Math. Z.}, 39 (1935), 215--226.
	
	
	\bibitem{HH}H. P. Heinig and L. Maligranda, Chebyshev inequality in function spaces, \textit{
		Journal Real Analysis Exchange} 17 (1) (1991), 211--247.
	
	\bibitem{L}
	A. Lupa\c{s}, The best constant in an integral inequality,
	\textit{Mathematica}, (Cluj, Romania) 15 (38)(2) (1973), 219--222.
	
	\bibitem{MPF}
	D.S. Mitrinovi\'{c}, J.E. Pe\v{c}ari\'{c} and A.M. Fink, Classical
	and New Inequalities in Analysis, in: Mathematics and its
	Applications (East European Series), vol. 61, Kluwer Academic
	Publishers Group, Dordrecht 1993.
	
	\bibitem{O}
	A.M. Ostrowski, On an integral inequality, \textit{Aequat. Math.},
	4 (1970), 358--373.
	
	
	\bibitem{P}
	J. Pe\v{c}ari\'{c} and I. Peri\'{c}, Identities for the Chebyshev
	functional involving derivatives of arbitrary order and
	applications, \textit{J. Math. Anal. Appl.}, 313 (2006), 475--483.
\end{thebibliography}
\end{document}